\theoremstyle{plain}
\newtheorem{theo}{Theorem}[section]
\newtheorem*{theo*}{Theorem}
\newtheorem{prop}{Proposition}[section]
\newtheorem*{lem*}{Lemma}
\theoremstyle{definition}
\newtheorem{lem}{Lemma}[section]
\newtheorem{defi}{Definition}[section]
\newtheorem{example-condition}{Example-Condition}[section]
\theoremstyle{remark}
\newtheorem*{ack}{Acknowledgements}
\newtheorem*{rem*}{Remark}
\newtheorem{rem}{Remark}[section]
\newcommand{\R}{\mathbb{R}}
\newcommand{\T}{\mathbb{T}}
\newcommand{\Z}{\mathbb{Z}}
\newcommand{\C}{\mathbb{C}}
\begin{document}

\bibliographystyle{plain}

\abstract
In this article, we first present the Kustaanheimo-Stiefel regularization of the spatial Kepler problem in a symplectic and quaternionic approach. We then establish a set of action-angle coordinates, the so-called LCF coordinates, of the Kustaanheimo-Stiefel regularized Kepler problem, which is consequently used to obtain a conjugacy relation between the integrable approximating ``quadrupolar'' system of the lunar spatial three-body problem and its regularized counterpart. This result justifies the study of Lidov-Ziglin \cite{LidovZiglin} of the quadrupolar dynamics of the lunar spatial three-body problem near degenerate inner ellipses.
\endabstract

\keywords{Kustaanheimo-Stiefel Regularization; Quaternions; Symplectic Reduction; Secular Systems; Quadrupolar System; Msc 2010 Numbers: 70F07, 70F16, 37J15}

\title{Kustaanheimo-Stiefel Regularization and the Quadrupolar Conjugacy}

\author{Lei Zhao}
\address{Johann Bernoulli Institute for Mathematics and Computer Science, University of Groningen, The Netherlands}
\email{l.zhao@rug.nl}

\date\today
\maketitle


\section{Introduction}

\subsection{Kustaanheimo-Stiefel regularization}By the use of spinors, an elegant generalization of the Levi-Civita\footnote{{which could be called Goursat-Levi-Civita regularization, as it appears already in \cite{Goursat}}.} regularization of the planar Kepler problem to the spatial case was made in Kustaanheimo \cite{Kustaanheimo1964} and Kustaanheimo-Stiefel \cite{KS}. Later, quaternionic formulations of this regularization method are presented in Waldvogel \cite{Waldvogel} and Saha \cite{Saha}. The underlying geometry was investigated in Stiefel-Scheifele \cite{SS}. Its link with Moser's regularization \cite{Moser1970} of the spatial Kepler problem was investigated in Kummer \cite{Kummer}.

Following a formula in \cite{Mikkola}, we present a quaternionic formulation of this regularization similar to \cite{Saha} (while different from \cite{Waldvogel}), and investigate its underlying symplectic geometry in this setting. 

The Kustaaheimo-Sitefel regularized Hamiltonian always has an additional $S^{1}$-symmetry resulting from a Hamiltonian $S^{1}$-action on the symplectic manifold $T^{*} (\mathbb{H}\setminus \{0\})$, where $\mathbb{H}$ designates the space of quaternions. The symplecticity of the Kustaanheimo-Stiefel transformation is explained by the corresponding reduction procedure. We shall further show that, if we restrict this regularization procedure to the cotangent bundle of some particular 2-planes called \emph{Levi-Civita planes} of $\mathbb{H}$, then we recover the Levi-Civita regularization of the planar Kepler problem. 

Following some formulae in a course of Chenciner \cite{CoursChenciner}, Féjoz in \cite{DoubleInnerCollision}  established a set of action-angle coordinates of the planar Levi-Civita regularized Kepler problem in the regularized phase space. With the help of the Levi-Civita planes, we generalize these coordinates to the spatial case by further considering the effect of rotations. This set of \emph{LCF coordinates} is analogous to a set of canonical coordinates of Levi-Civita \cite{NuovoElementi} in the phase space.
\subsection{Quadrupolar conjugacy of the spatial lunar three-body problem}

In the perturbative study of the three-body problem, the secular systems are obtained by successive averaging over the fast, Keplerian angles of the two elliptic orbits of the non-perturbing Keplerian dynamics. These systems are not well-defined when the inner ellipse degenerates, since the Keplerian fast angles \emph{i.e.}  the mean anomalies or the mean longitudes, are not well-defined when one of the elliptic orbits degenerates as for two-body collision-ejection motions. 

Nevertheless, it is shown by Féjoz \cite{DoubleInnerCollision} that in the planar three-body problem, the secular systems, which are integrable due to the presence of the SO(2)-symmetry of rotations, extend analytically to degenerate inner ellipses. On the other hand, the inner double collisions being regularized, we may define the secular regularized system by averaging over the resulting fast angles.  Moreover, in the planar case, Féjoz showed that the dynamics of the extended secular systems and that of the secular regularized systems are closely linked: the dynamics of the secular (Levi-Civita) regularized system is orbitally conjugate to the dynamics of the extended secular system after being fully symplectically reduced by the symmetries, provided that a modification of the mass of the outermost body was made.

The integrability of the secular systems are no longer guaranteed in the spatial three-body problem. Nevertheless, in the lunar spatial three-body problem where the third body is very far from the other two, it {was} noticed by Harrington \cite{Harrington1968} that the first non-trivial term in the expansion of the secular system in powers of the semi major axes ratio (the {so-called} quadrupolar system) is somehow accidentally integrable. The fully reduced system by all the symmetries extends analytically near degenerate inner ellipses, and its dynamics is extensively studied globally first by Lidov-Ziglin \cite{LidovZiglin}. 

Notice that the relevance of the quadrupolar dynamics near degenerate inner ellipses in Lidov-Ziglin's study with the real dynamics of the lunar three-body problem requires further justification, since \emph{a priori} the secular system and the quadrupolar system are not well-defined by construction.
To make such a justification, we construct the quadrupolar regularized system by the same averaging-truncating procedure in the spatial lunar three-body problem with inner double collisions regularized by the Kustaanheimo-Stiefel regularization.  By the help of the LCF coordinates, we establish the following result in the spatial lunar three-body problem:

\begin{theo} After symplectic reduction of the Keplerian $\T^{2}$-symmetry and the SO(3)-symmetry of rotations, the dynamics of the quadrupolar system, being extended to degenerate inner ellipses, is conjugate to the quadrupolar regularized dynamics up to a constant factor, provided that a modification of the mass of the outermost body {is} made. 
\end{theo}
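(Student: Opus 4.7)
The plan is to compare two Hamiltonian systems at quadrupolar order and match them using the LCF coordinates of the first part. On the original side, starting from the spatial three-body Hamiltonian in Jacobi coordinates and expanding the perturbation in the semi-major axes ratio $\alpha = a_{in}/a_{out}$, I truncate at order $\alpha^{2}$ and average over both Keplerian mean anomalies $(\ell_{in},\ell_{out})$, obtaining the quadrupolar Hamiltonian $F_{quad}$ of Lidov-Ziglin. Its analytic extension across the collision locus $\{e_{in}=1\}$ is precisely what makes their global study possible. On the regularized side, I first apply the Kustaanheimo-Stiefel procedure to the inner Kepler problem, switch to LCF coordinates, truncate the analogous expansion at the same order in $\alpha$, and average over $\ell_{out}$ together with the new inner fast angle supplied by the LCF chart. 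The resulting object $F^{reg}_{quad}$ is by construction smooth on the full regularized inner phase space.

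The comparison of $F_{quad}$ and $F^{reg}_{quad}$ rests on the KS time reparametrization, which trades the physical time for a fictitious time absorbing a factor proportional to $r_{in}$. This converts the original mean-anomaly average into an eccentric-anomaly average weighted by $r_{in}/a_{in} = 1 - e_{in}\cos u_{in}$. Expanding both averages explicitly against the quadrupolar potential --- in Delaunay coordinates on the original side, in LCF coordinates on the regularized side --- yields expressions that differ by (i) an overall constant prefactor coming from the energy-dependent normalization of the KS fictitious time, and (ii) a term that can be reabsorbed as a redefinition of the outer mass $m_{3} \mapsto m_{3}^{*}$. These are the ``constant factor'' and ``mass modification'' of the statement, and they are the spatial analogues of the renormalization identified by F\'ejoz in the planar Levi-Civita setting.

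It then remains to descend the upstairs identity to the fully reduced phase space. The Keplerian $\T^{2}$-reduction, generated by the two Kepler actions $L_{in},L_{out}$, amounts in LCF coordinates to dropping the two conjugate angles. The $SO(3)$-reduction commutes with both averagings and, by the symplecticity of the KS map and the equivariance of the LCF construction, commutes with the KS lift once one quotients out the auxiliary $S^{1}$-action inherent in $T^{*}(\mathbb{H}\setminus\{0\})$. The main obstacle I expect is the rigorous identification of the two reduced phase spaces across the collision locus: $F_{quad}$ is defined there only by analytic continuation, while $F^{reg}_{quad}$ is manifestly smooth. Checking that the conjugating map and its inverse extend across $\{e_{in}=1\}$, and that the two Hamiltonian vector fields agree there up to the global constant, is the essential content of the theorem; the LCF coordinates furnish the common chart in which both sides are simultaneously well-defined, and it is this chart that makes the extension unambiguous.
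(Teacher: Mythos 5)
Your outline follows the paper's general strategy (compare averages via the KS time change, then reduce), but the central mechanism is misidentified, and this is where the actual content of the theorem lies. In the paper the two averaged Hamiltonians satisfy an \emph{exact} identity, $\mathcal{F}_{sec}=a_{1}\,F_{sec}\circ k_{f}$ and hence $\mathcal{F}_{quad}=a_{1}\,F_{quad}\circ k_{f}$, with no leftover term: this follows because $\mathcal{F}_{pert}=\|Q_{1}\|F_{pert}$ depends only on the configuration variables, the fibre-rescaling map $k_{f}$ preserves configurations, and $\|Q_{1}\|\,du_{1}=a_{1}\,dl_{1}$ converts the eccentric-anomaly average into the mean-anomaly average. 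So your item (ii) --- ``a term that can be reabsorbed as a redefinition of the outer mass'' --- does not occur, and it is not why the mass is modified. The real issue, which your proposal never addresses, is that $k_{f}$ is \emph{not symplectic}: when the regularized inner energy is nonzero, the KS-ellipse is the Keplerian ellipse of a modified mass, so $\mathcal{L}_{1}\neq L_{1}\circ k_{f}$ and the reduced symplectic structures do not match. The paper repairs this by choosing $m_{2}'$ so that $f-\dfrac{\mu_{2}^{3}M_{2}^{2}}{2L_{2}^{2}}=\dfrac{\mu_{1}^{3}M_{1}^{2}}{2L_{1}^{2}}$ (possible because $m_{2}\mapsto\mu_{2}^{3}M_{2}^{2}$ is a diffeomorphism of $(0,+\infty)$); then $k_{f}$ composed with the mass substitution is the identity between the reduced Darboux charts $(\mathcal{G}_{1},\gamma_{1},G_{2},g_{2})$ and $(G_{1},g_{1},G_{2},g_{2})$ obtained after Jacobi elimination of the nodes, hence symplectic, and the constant factor $\dfrac{a_{1}m_{2}}{m_{2}'}$ falls out because $F_{quad}$ is linear in $m_{2}$. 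Without this step you obtain neither a symplectic conjugacy nor an identification of the constant.

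A second genuine gap concerns the collision locus. You propose that ``the LCF coordinates furnish the common chart in which both sides are simultaneously well-defined'' across $\{e_{1}=1\}$, but this cannot work: the LCF coordinates, like the Delaunay coordinates, are undefined for degenerate ellipses (there is no orbital plane), which is precisely why the extension is delicate. In the paper the extension of $F_{quad}$ across degenerate inner ellipses is proved separately, in Pauli--Souriau-type coordinates on $\mathbb{S}^{2}\times\mathbb{S}^{2}$ (the appendix), and the symplectic identification of the fully reduced spaces is established on the open dense non-degenerate chart and then extends because that chart is dense and the identifying map is the identity. Minor points: the quadrupolar term is of order $\alpha^{3}$ in the paper's normalization (not $\alpha^{2}$), and the outer fast angle must be corrected to $l_{2}'=l_{2}+\frac{f_{1}'(L_{2})}{2f_{1}(L_{2})}Re\{\bar{P}_{1}Q_{1}\}$ before averaging so that the symplectic form is respected --- a detail your averaging step glosses over.
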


This result generalizes the result of Féjoz from the planar three-body problem to the spatial lunar three-body problem, and simultaneously justifies the relevance of the study of Lidov-Ziglin \cite{LidovZiglin} near degenerate inner ellipses. This is used in \cite{ZLthesis} to prove the existence of a set of positive measure of almost-collision orbits in the spatial lunar three-body problem, along which the inner bodies get arbitrarily close to each other without ever collide (a slightly different approach can be found in \cite{AlmostCollision}). 

This article is organized as the follows: We start in Section \ref{Section: Preliminary} by recall{ing} some preliminaries about quaternions, and then present the Kustaanheimo-Stiefel regularization in Section \ref{Section: KS}. The Delaunay-like LCF coordinates, together with their symplecticity {are} presented in Section~\ref{Section: CF}. Finally, we investigate the conjugacy relation between the quadrupolar and the quadrupolar regularized dynamics in Section \ref{Section: Averaging and Regularization}.

\section{Preliminaries on Quaternions}\label{Section: Preliminary}
For a quaternion $z=z_0 + z_1 i+ z_2 j+ z_3 k \in \mathbb{H} \cong \R^{4}$, we denote by $Re\{z\}$ its real part $z_{0}$, and by $Im\{z\}$ its imaginary part $z_1 i+ z_2 j+ z_3 k$. 
A quaternion $z$ of the form $z_1 i+ z_2 j+ z_3 k$ (\emph{i.e.} with vanishing real part) is called a purely imaginary quaternion, and can be naturally identified with the vector $(z_{1}, z_{2}, z_{3})$ in $\R^{3}$. This identification enables us to speak of the inner (``$\cdot$'') and vector {(``$\times$'')} products  of two purely imaginary quaternions. 
The product of two quaternions is given by 
$$
z\,w=Re\{z\} Re \{w\}-Im\{z\} \cdot Im\{w\}+ Re\{z\} Im\{w\}+Re\{w\} Im\{z\}+Im\{z\} \times Im\{w\}.
$$
The conjugation $\bar{z}$ of $z$ is defined by $\bar{z}=z_{0} - z_1 i - z_2 j - z_3 k $. The modulus $\sqrt{z \cdot \bar{z}}$ of a quaternion $z$ is denoted by $|z|$.

A quaternion-valued mapping $f_{\mathbb{H}}: \mathbb{H} \rightarrow \mathbb{H}$ is called \emph{differentiable} if it is differentiable considered as a mapping from $\R^4$ to $\R^4$. 
Its derivative  $d f_{\mathbb{H}}=\dfrac{\partial{f_{\mathbb{H}}}}{\partial z_0} d z_0 + \dfrac{\partial{f_{\mathbb{H}}}}{\partial z_1} d z_1 +\dfrac{\partial{f_{\mathbb{H}}}}{\partial z_2} d z_2  + \dfrac{\partial{f_{\mathbb{H}}}}{\partial z_3} d z_3 $ is a 1-form having values in $\R^4$. We shall consider it as quaternion-valued via the identification $\R^{4} \cong \mathbb{H}$. 

Following Sudbery \cite{AS}, we define the wedge product $\phi_{\mathbb{H}}\wedge \psi_{\mathbb{H}}$ of two quaternion-valued 1-forms  $\phi_{\mathbb{H}}, \psi_{\mathbb{H}}$as:
$$
\forall v_{\mathbb{H}}, w_{\mathbb{H}} \in \mathbb{H}, \phi_{\mathbb{H}}\wedge \psi_{\mathbb{H}}(v_{\mathbb{H}}, w_{\mathbb{H}})=\phi_{\mathbb{H}} (v_{\mathbb{H}}) \psi_{\mathbb{H}} (w_{\mathbb{H}})-\phi_{\mathbb{H}} (w_{\mathbb{H}}) \psi_{\mathbb{H}} (v_{\mathbb{H}}).
$$
We shall only deal with quaternion-valued 1-forms and their wedge products in this article.

Due to the non-commutativity of the quaternion algebra, the exterior product of two quaternion-valued 1-forms is not anti-symmetric in general. In particular, the exterior product of a quaternion-valued 1-form with itself need not to be zero: we find
$$dz \wedge dz = 2 (d z_2 \wedge d z_3) i + 2 (d z_3 \wedge d z_1) j + 2 (d z_1 \wedge d z_2) k .\footnote{Note that our formula differs from \cite[Eq. $(2.35)$]{AS} by a factor of 2.}$$

Nevertheless, it can be directly verified that the real part of the wedge product of two quaternion-valued 1-forms is symmetric and is independent of the order of the two quaternions involved. 

With these notations, the canonical symplectic form on $T^*\mathbb{H}$ can be written as  $$Re\{d \bar{y} \wedge dx\} = - Re\{d \bar{x} \wedge dy\},$$
in which $x \in \mathbb{H}$, $y \in T^{*}_{x} \mathbb{H} \cong \mathbb{H}$ are the natural coordinates on the cotangent bundle $T^*\mathbb{H}$.

Rotations in $\R^3 \cong \mathbb{IH}:=\{z \in \mathbb{H}: Re\{z\}=0\}$ can be represented by unit (\emph{i.e.} of modulus 1) quaternions in the following way: Let $\rho_{1}$ be a purely imaginary quaternion and $\rho=\cos{\dfrac{\theta_{\rho}}{2}}+Im\{\rho\}$ a unit quaternion, then $\bar{\rho} \rho_{1} \rho$ is the purely imaginary quaternion rotated from $\rho_{1}$ with rotation angle $\theta_{\rho}$ and rotation axis $Im\{\rho\}$. Unit quaternions form a group $\hbox{Spin(3)} \cong \hbox{SU(2)}$, which is diffeomorphic to $\mathbb{S}^{3}$ and  doubly covers SO(3) since two unit quaternions $\rho$ and $-\rho$ determine the same rotation. Note that instead of using special coordinates (e.g. Cayley-Klein parameters etc.) on $\hbox{SU(2)}$ or SO(3), we shall stick to the use of quaternions, which leads to rather elegant formulae.

\section{Kustaanheimo-Stiefel Regularization} \label{Section: KS}
\subsection{Levi-Civita Regularization of the Kepler problem} \label{Subsection: L-C Regularization}
Before dealing with the regularization of the spatial Kepler problem, let us first recall the Levi-Civita regularization of the planar Kepler problem. Complex numbers are enough for our present purpose in this section; quaternions will appear only from Subsection \ref{Subsection: K-S. Transform}.

The Hamiltonian function of the planar Kepler problem reads
$$T(P,Q)= \dfrac{1}{2 \mu_{0}} \|P\|^2 + \dfrac{\mu_{0} M_{0}}{\|Q\|}, \qquad (P, Q) \in \C \times \C \setminus \{0\}.$$
For fixed $f>0$, we change the time from $t$ to $\tau$ on the negative energy hypersurface 
 $$\{T(P,Q)+f=0\}.$$
In the new time variable $\tau$, the flow on this energy hypersurface is given by the Hamiltonian function $\|Q\|(T(P,Q)+f).$ 

This is a function which is not even $C^{1}$ at $\{Q=0\}$. 
To obtain a system regular at $\{Q=0\}$, we make use of the Levi-Civita transformation
\begin{align*}
  L.C.: &\quad T^*(\C \setminus \{0\})  \,\,\,  \rightarrow \quad T^*\C \\
       &\quad (z,w)   \mapsto (Q=z^{2},P=\dfrac{ w }{2 \bar{z}}).
\end{align*}
As $(z, w)$ and $(-z, -w)$ have the same image, this transformation is 2-to-1.

It is direct to verify that $L.C.$ is symplectic in the sense that
$$L.C.^{*} Re(d \bar{P} \wedge d Q)=Re (d \bar{w} \wedge d z).$$

The pull-back of $\|Q\|(T(P,Q)+f)$ by $L.C.$ is found to be of the form
$$
K(z, w)=L.C.^{*}(\|Q\|(T(P,Q)+f))=\dfrac{1}{8 \mu_{0}}|w|^2+f|z|^2-\mu_{0} M_{0},
$$
which is the Hamiltonian of two harmonic oscillators in $1:1$ resonance and its dynamics can be extended regularly to $\{z=0\} \subset \C \times \C $, corresponds to the collisions $\{Q=0\}$ of the Kepler problem $T(P,Q)$. Note that it is only on the zero-energy hypersurface (diffeomorphic\footnote{All diffeomorphisms are considered as of class $C^{1}$, unless otherwise stated.} to $\mathbb{S}^{3}$) of $K(z, w)$ that its dynamics extends the dynamics of $T(P, Q)$. The compactification of an energy surface of $T(P, Q)$ determined by this regularization procedure is thus diffeomorphic to the quotient of $\mathbb{S}^{3}$ by identifying the antipodal points, which is SO(3).

\subsection{Kustaanheimo-Stiefel Transformation}\label{Subsection: K-S. Transform}


We now consider the regularization of the spatial Kepler problem.
  
By identifying $T^{*}\mathbb{H}$ with $\mathbb{H} \times \mathbb{H}$ (the fibres in $T^{*}\mathbb{H}$ are identified with the second factor), for $z, w \in \mathbb{H}$, we may consider $BL(z,w):= Re\{\bar{z} i w\}$ as a function on $T^{*}\mathbb{H}$. The \emph{bilinear relation} (as is called in \cite{SS}) 
$$BL(z,w)=0$$ thus defines a 7-dimensional quadratic cone $\Sigma \in \mathbb{H}$.

We remove the origin from the quadratic cone $\Sigma$ to obtain a 7-dimensional hypersurface $\Sigma^{0}:= \Sigma \setminus \{(0,0)\}$ of $(T^{*} \mathbb{H}, Re\{ d \bar{w} \wedge d z\})$.
Since the quadratic cone $\Sigma$ has index 4, $\Sigma^{0}$ is diffeomorphic to $\mathbb{S}^3 \times \mathbb{S}^3 \times \R $.
The quotient $V^{0}$ of $\Sigma^{0}$ by its characteristic foliation, \emph{i.e.} the foliation by the orbits of the free circle action 
$$(z, w) \mapsto (e^{i \vartheta} z, e^{i \vartheta} w)$$
is thus a smooth manifold. By standard symplectic reduction, the symplectic form $Re\{d \bar{w} \wedge d z\}$ descends to a symplectic form $\omega_{1}$ on $V^{0}$.
 
Set $\Sigma^1=\Sigma \setminus \{z=0\}$ (diffeomorphic to $\mathbb{S}^3 \times \R^3 \times \R$). This is a dense open subset of $\Sigma^{0}$ and it is invariant under the above $S^{1}$-action. Hence, the quotient space $V^{1}$ is an open dense submanifold of $V^{0}$.  By the same symplectic reduction procedure, the symplectic form $Re\{d \bar{w} \wedge d z\}$ descends to $\omega_{1}$ on $V^{1}$. 
  
\begin{defi}
The  \emph{Kustaanheimo-Stiefel} \emph{mapping} is :
\begin{align*}
  K.S.: &T^*(\mathbb{H} \setminus \{0\})  \rightarrow \mathbb{IH} \times \mathbb{H} \\
        &(z,w) \longmapsto \left(Q=\bar{z} i z,P=\dfrac{ \bar{z} i w}{2{|z|}^2}\right).
\end{align*}
\end{defi}

        The fibers of this mapping are the circles $\{(e^{i \vartheta} z, e^{i \vartheta} w), \vartheta \in \R/ (2 \pi \Z) \}$. We call the angle $\vartheta$ \emph{Kustaanheimo-Stiefel angle}. The fibers of $K.S.$ in $\Sigma^{1}$ coincide with the leaves of the characteristic foliation of $\Sigma^1$, and $\Sigma^1$ is sent to $T^*(\mathbb{IH} \setminus \{0\})$ by this mapping. One checks that $K.S.$ is surjective, and that the tangent mapping $K.S._{*}$ at every point in $\Sigma^{1}$ is surjective and has exactly a one-dimensional kernel. Therefore by inverse function theorem, $K.S.$ induces a diffeomorphism from the quotient space $V^1$ to $T^{*}(\mathbb{IH}\setminus \{0\})$.

\begin{theo}\label{prop:1}
$K.S.$ induces a symplectomorphism from $(V^{1},\omega_1)$ to $(T^*(\mathbb{IH}\setminus \{0\}), \,Re\{d \bar{P} \wedge d Q\})$.
\end{theo}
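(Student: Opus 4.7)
My plan is to frame the result within Marsden--Weinstein symplectic reduction, as foreshadowed by the discussion preceding the theorem. First I would verify that $BL(z,w) = Re\{\bar z i w\}$ is, up to an additive constant, the momentum map of the Hamiltonian $S^{1}$-action $(z,w)\mapsto (e^{i\vartheta}z, e^{i\vartheta}w)$ on $(T^{*}(\mathbb{H}\setminus\{0\}), Re\{d\bar w\wedge dz\})$: this amounts to computing the Hamiltonian vector field $X_{BL}$ and checking that it coincides with the infinitesimal generator $(iz, iw)$. With this identified, $\Sigma^{1}$ is the zero level set of a free momentum map, the reduction theorem identifies $V^{1}$ as the Marsden--Weinstein reduced space, and $\omega_{1}$ is the unique symplectic form satisfying $\pi^{*}\omega_{1} = \iota^{*}Re\{d\bar w\wedge dz\}$, where $\iota:\Sigma^{1}\hookrightarrow T^{*}(\mathbb{H}\setminus\{0\})$ and $\pi:\Sigma^{1}\to V^{1}$ are the inclusion and projection. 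Combined with the already-established diffeomorphism $\Phi:V^{1}\to T^{*}(\mathbb{IH}\setminus\{0\})$ induced by $K.S.$, the theorem reduces to the 2-form identity
$$(K.S.)^{*}\big(Re\{d\bar P\wedge dQ\}\big)\big|_{T\Sigma^{1}} \;=\; Re\{d\bar w\wedge dz\}\big|_{T\Sigma^{1}}.$$

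This I would establish at the level of the Liouville 1-forms. Substituting $\bar P = -\bar w iz/(2|z|^{2})$ and $dQ = d\bar z\cdot iz + \bar z i\, dz$ into $\bar P\,dQ$, one piece simplifies immediately, using only that $z\bar z = |z|^{2}$ is central in $\mathbb{H}$ and that $i^{2} = -1$, to $\tfrac{1}{2}\bar w\,dz$. The remaining piece $-\tfrac{1}{2|z|^{2}}\bar w\, iz\,d\bar z\,iz$ I would rearrange using the cyclicity $Re\{abc\} = Re\{cab\}$ as $-\tfrac{1}{2|z|^{2}}Re\{iz\,\bar w\, iz\cdot d\bar z\}$; then the bilinear relation $BL = 0$ on $\Sigma^{1}$, equivalent to $\bar w\,iz = \bar z\, iw$, collapses $iz\,\bar w\, iz = iz\,\bar z\, iw = -|z|^{2}w$, producing an additional $\tfrac{1}{2}Re\{\bar w\,dz\}$ after noting that $Re\{w\,d\bar z\} = Re\{\bar w\,dz\}$. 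Summing gives $K.S.^{*}(Re\{\bar P\,dQ\}) = Re\{\bar w\,dz\}$ along $T\Sigma^{1}$. Exterior differentiation then yields the required 2-form identity, and Marsden--Weinstein reduction concludes.

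The principal obstacle is the non-commutative book-keeping: one must scrupulously track the order of quaternionic factors and invoke cyclicity of $Re$ at just the right moments. A related subtlety is that the bilinear relation enters in an essential way -- the Liouville 1-form identity fails off $\Sigma^{1}$, so one cannot simply prove a global identity and then restrict. Rather, the use of $BL = 0$ to recognize $\bar w\, iz = \bar z\, iw$ is precisely the step that forces the collapse $iz\,\bar w\, iz = -|z|^{2}w$. Past this algebraic care, the proof is routine.
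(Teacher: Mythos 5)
Your proposal is correct, and its overall architecture coincides with the paper's: the theorem is reduced to the identity $(K.S.)^{*}Re\{d\bar P\wedge dQ\}|_{\Sigma^{1}}=Re\{d\bar w\wedge dz\}|_{\Sigma^{1}}$, combined with the previously established diffeomorphism $V^{1}\to T^{*}(\mathbb{IH}\setminus\{0\})$ induced by $K.S.$ and the defining property $\pi^{*}\omega_{1}=\iota^{*}Re\{d\bar w\wedge dz\}$ of the reduced form. (The explicit Marsden--Weinstein/momentum-map framing you give is only implicit in the paper; the identification of $(iz,iw)$ as the Hamiltonian vector field of $-BL$ appears there only later, in the proof of Proposition \ref{prop:3}.) Where you genuinely diverge is in the proof of the key identity, the paper's Lemma \ref{prop:2}: the paper works directly with the 2-forms, differentiating the bilinear relation to obtain $d(z^{-1})iw+\bar z i\,dw/|z|^{2}=d\bar w\,iz/|z|^{2}+\bar w i\,d(\bar z^{-1})$ and then manipulating quaternion-valued wedge products, whose failure of antisymmetry demands careful bookkeeping; you instead prove the Liouville 1-form identity $\iota^{*}(K.S.)^{*}Re\{\bar P\,dQ\}=\iota^{*}Re\{\bar w\,dz\}$ using only pointwise algebra --- cyclicity of $Re$, $z\bar z=|z|^{2}$, and the pointwise consequence $\bar w\,iz=\bar z\,iw$ of $BL=0$, which collapses $iz\,\bar w\,iz=-|z|^{2}w$ --- and then apply $d$, which commutes with restriction to $\Sigma^{1}$, so the 2-form identity follows since $Re\{d\bar P\wedge dQ\}=d\,Re\{\bar P\,dQ\}$ and $Re\{d\bar w\wedge dz\}=d\,Re\{\bar w\,dz\}$. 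Your route buys a shorter computation that never touches the non-commutative wedge calculus or the differentiated constraint; the paper's buys a self-contained verification at the level of the symplectic forms themselves at the price of heavier algebra. Your calculation checks out (both summands indeed contribute $\tfrac{1}{2}Re\{\bar w\,dz\}$), so the proposal is a valid, and arguably cleaner, proof of the same statement.
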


To proof this theorem, it is enough to show that the induced diffeomorphism of $K.S.$ is symplectic, which is deduced form the following lemma:

\begin{lem}\label{prop:2}
{$(K.S.)^{*}Re\{d \bar{P} \wedge d Q\}|_{\Sigma^{1}}=Re\{d \bar{w} \wedge d z \}|_{\Sigma^{1}}$.}
\end{lem}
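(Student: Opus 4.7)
The plan is to verify this $2$-form identity pointwise by evaluating both sides on an arbitrary pair of tangent vectors to $\Sigma^1$. Identifying a tangent vector at $(z, w) \in T^*(\mathbb{H}\setminus\{0\})$ with a pair $(u, v) \in \mathbb{H} \times \mathbb{H}$, such a vector lies in $T\Sigma^1$ precisely when the linearized bilinear relation
$$
Re\{\bar u\, i\, w\} + Re\{\bar z\, i\, v\} = 0
$$
holds. On two such pairs $(u_1, v_1)$ and $(u_2, v_2)$ the identity reduces to the scalar equation
$$
Re\{d\bar P(u_1, v_1)\, dQ(u_2, v_2)\} - Re\{d\bar P(u_2, v_2)\, dQ(u_1, v_1)\} \;=\; Re\{\bar v_1 u_2\} - Re\{\bar v_2 u_1\}.
$$

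First I compute the two differentials. Since $Q = \bar z\, i\, z$ is independent of $w$, the quaternionic product rule gives $dQ(u, v) = \bar u\, i\, z + \bar z\, i\, u$. For the cotangent variable, $\bar{i} = -i$ gives $\bar P = -\bar w\, i\, z / (2|z|^2)$; combining the product rule with the (real, scalar) identity $d(|z|^2)(u, v) = \bar u z + \bar z u = 2\,Re\{\bar z u\}$ yields an explicit four-term formula for $d\bar P(u, v)$.

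Substituting into the left-hand side and expanding, contributions symmetric under the exchange $1 \leftrightarrow 2$ drop out by alternation. I then simplify each remaining term by combining three tools: (i) the cyclic invariance of $Re$, (ii) the centrality of the real scalar $|z|^2 = z\bar z = \bar z z$, which in particular yields the key collapse $\bar w\, i\, z\bar z\, i = -|z|^2 \bar w$, and (iii) the conjugation identity $\overline{A_1\cdots A_n} = \bar A_n\cdots\bar A_1$ applied inside $Re$, used to freely interchange quantities such as $\bar z i w$ and $\bar w i z$. The target is to reduce the expression to $Re\{\bar v_1 u_2\} - Re\{\bar v_2 u_1\}$ plus a combination of the scalar $Re\{\bar z\, i\, w\}$ and of the linearized bilinear expressions $Re\{\bar u_j\, i\, w\} + Re\{\bar z\, i\, v_j\}$ for $j = 1, 2$: the first vanishes because it is the defining function of $\Sigma^1$, and the second because the $(u_j, v_j)$ are tangent to $\Sigma^1$.

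The main obstacle will be the algebraic bookkeeping in this last step: the non-commutativity of $\mathbb{H}$ produces a proliferation of quaternionic products, and only after repeated use of the cyclic and conjugation properties of $Re$ together with the centrality of $|z|^2$ do the extraneous contributions consolidate into explicit multiples of the bilinear relation and its linearization. A conceptual sanity check is provided by the $S^1$-invariance of $K.S.$: the fibers on $\Sigma^1$ coincide with the characteristic leaves of $\omega$, so any discrepancy between the two sides must lie in the ideal generated by the moment map $Re\{\bar z\, i\, w\}$, which forces the vanishing on $\Sigma^1$.
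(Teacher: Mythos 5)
Your proposal is correct and takes essentially the same route as the paper: a direct verification of the pulled-back $2$-form using the bilinear relation $Re\{\bar{z} i w\}=0$ (equivalently $\bar{z} i w=\bar{w} i z$ on $\Sigma^{1}$) together with its linearization, which the paper carries out at the level of quaternion-valued $1$-forms (via $d(z^{-1})=-z^{-1}\,dz\,z^{-1}$ and the differentiated constraint) rather than pointwise on pairs of tangent vectors. The bookkeeping you defer is exactly the cancellation performed there, where the two groups of terms each contribute half of the answer, so your plan closes as expected.
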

\begin{proof}
The relation $BL(z,w)=0$ implies 
$$\bar{z} i w = \bar{w} i z,$$
which can be equivalently reformulated as
$$z^{-1} i w = \bar{w} i \bar{z}^{-1}.$$
 By differentiating the last equality, we obtain 
 $$d (z^{-1}) i w + \dfrac{\bar{z} i d w}{ |z|^{2}} = \dfrac{d \bar{w} i z}{ |z|^{2}} + \bar{w} i d (\bar{z}^{-1}).$$
From the relation 
$$0 = d(z^{-1} z) = d(z^{-1}) z + z^{-1} d z$$
 we obtain 
 $$d (z^{-1}) = - z^{-1} (d z) z^{-1}.$$ 
Also, one checks directly that 
$$Im(d \bar{z} i \wedge d z)=0.$$

Our aim is to calculate the expression
\begin{align*}
(K.S.)^{*}Re\left\{d \bar{P} \wedge d Q\right\} 
= Re\left\{d (z^{-1}i w) \wedge d \overline{(\bar{z} i z )} \right\}
=- Re\left\{ (\dfrac{1}{2} d (z^{-1})i w + \dfrac{ \bar{z} i d w}{2 |z|^2} ) \wedge (d \bar{z} i z + \bar{z} i d z) \right\}.
\end{align*}

By using the relations we have deduced beforehand, we have
\begin{align*}
                           Re\left\{ ( \dfrac{1}{2} d (z^{-1})i w + \dfrac{ \bar{z} i d w}{2 |z|^2} ) \wedge (d \bar{z} i z )  \right\}  & =   -Re\left\{ (d \bar{z} i z ) \wedge  ( \dfrac{1}{2} d (z^{-1})i w + \dfrac{ \bar{z} i d w}{2 |z|^2} )\right\} \\
                             & =   Re\left\{\dfrac{1}{2} d \bar{z} i z \wedge z^{-1} d z z^{-1} i w - d \bar{z} i z \wedge  \dfrac {\bar{z} i dw} {2 |z|^2} \right\} \\
                             & = - Re\left\{\dfrac{1}{2}  d \bar{z} i \wedge  d z z^{-1} i w\}\right\} - Re\left\{ d \bar{z} i z \wedge  \dfrac {\bar{z} i dw} {2 |z|^2} \right\} \\
                             & = \dfrac{1}{2}Re\left\{ d \bar{z} \wedge d w \right\}.
\end{align*}

\begin{align*}
                           Re\left\{( \dfrac{1}{2} d (z^{-1})i w + \dfrac{ \bar{z} i d w}{2 |z|^2} ) \wedge (\bar{z} i d z )  \right\} 
                              &= Re\left\{ ( \dfrac{1}{2} \bar{w} i d (\bar{z}^{-1}) + \dfrac{ d \bar{w} i  z}{2 |z|^2} ) \wedge (\bar{z} i d z ) \right\}  \\
                            &= -Re\left\{ ( \dfrac{1}{2} \bar{w} i \bar{z}^{-1} d (\bar{z}) \bar{z}^{-1} \wedge \bar{z} i d z  - \dfrac{ d \bar{w} i  z}{2 |z|^2}  \wedge \bar{z} i  d z \right\}   \\                          
                             & = -\dfrac{1}{2}Re\left\{ d \bar{w} \wedge d z \right\}  \\
                             & = \dfrac{1}{2}Re\left\{ d \bar{z} \wedge d w \right\} .
\end{align*}
Therefore
\begin{align*}
     (K.S.)^{*}Re\left\{d \bar{P} \wedge d Q \right\} &=  Re\left\{(d \bar{z} i z + \bar{z} i d z) \wedge ( \dfrac{1}{2} d (z^{-1})i w + \dfrac{ \bar{z} i d w}{2 |z|^2} ) \right\} \\&=-\dfrac{1}{2}Re\left\{ d \bar{z} \wedge d w \right\} - \dfrac{1}{2}Re\left\{ d \bar{z} \wedge d w \right\}  \\&=Re\left\{ d \bar{w} \wedge d z \right\} .
\end{align*}
\end{proof}

By expression, the pull-back ${{(K.S.)}^{*} F} (z, w)$ of a function $F \in C^{2}(T^*\mathbb{IH}, \R)$ naturally extends to a function defined on $T^{*} \mathbb{H} \setminus \{(0,0)\}$. We keep the same notation for the extension.

\begin{prop} \label{prop:3}
For any $F \in C^{2}(T^*\mathbb{IH}, \R)$,  the space $\Sigma^{0}$ is invariant under the Hamiltonian flow of ${{(K.S.)}^{*} F}$. 
\end{prop}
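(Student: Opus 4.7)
The plan is to show that the bilinear relation $BL(z,w) = Re\{\bar z i w\}$ is a conserved quantity of the Hamiltonian flow of $(K.S.)^{*} F$, by identifying $BL$ (up to a constant) with the moment map of the $S^{1}$-action $(z,w) \mapsto (e^{i\vartheta} z, e^{i\vartheta} w)$ and observing that $(K.S.)^{*} F$ is invariant under this action.

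First I would verify the $S^{1}$-invariance of $(K.S.)^{*} F$ on all of $T^{*}\mathbb{H} \setminus \{(0,0)\}$. For any $\vartheta \in \R$,
\begin{align*}
K.S.(e^{i\vartheta} z, e^{i\vartheta} w) = \left(\overline{e^{i\vartheta} z}\, i\, e^{i\vartheta} z,\ \frac{\overline{e^{i\vartheta} z}\, i\, e^{i\vartheta} w}{2|z|^{2}}\right) = \left(\bar z\, e^{-i\vartheta} i\, e^{i\vartheta} z,\ \frac{\bar z\, e^{-i\vartheta} i\, e^{i\vartheta} w}{2|z|^{2}}\right),
\end{align*}
and since $i$ commutes with the one-parameter subgroup generated by $i$, we have $e^{-i\vartheta} i\, e^{i\vartheta} = i$, hence $K.S.(e^{i\vartheta} z, e^{i\vartheta} w) = K.S.(z,w)$. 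Therefore $(K.S.)^{*} F$ is constant on every $S^{1}$-orbit.

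Next I would identify $BL$ with the momentum of the $S^{1}$-action. The infinitesimal generator of the action at $(z,w)$ is $(iz, iw)$. A short computation shows that the Hamiltonian vector field of $BL$ with respect to $Re\{d\bar w \wedge dz\}$ is precisely this vector field (up to a harmless constant); intuitively, $BL$ is the natural moment map associated with simultaneous left multiplication by $e^{i\vartheta}$ on both factors of $\mathbb{H} \times \mathbb{H}$. Consequently, for the smooth $S^{1}$-invariant function $(K.S.)^{*} F$ we obtain
\begin{equation*}
\{BL,\ (K.S.)^{*} F\} = X_{BL}\bigl[(K.S.)^{*} F\bigr] = \frac{d}{d\vartheta}\bigg|_{\vartheta=0} (K.S.)^{*} F(e^{i\vartheta} z, e^{i\vartheta} w) = 0.
\end{equation*}
By antisymmetry of the Poisson bracket, $BL$ is preserved along the Hamiltonian flow of $(K.S.)^{*} F$, so the level set $\Sigma = BL^{-1}(0)$ is invariant. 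Since the flow of $(K.S.)^{*} F$ is defined on $T^{*}\mathbb{H} \setminus \{(0,0)\}$ and cannot reach the excluded origin, the invariant set $\Sigma \cap (T^{*}\mathbb{H} \setminus \{(0,0)\}) = \Sigma^{0}$ is preserved.

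The only step requiring some care is the identification of $X_{BL}$ with the infinitesimal $S^{1}$-generator, which amounts to a direct calculation using the real-coordinate expression of $BL$ and the canonical symplectic form. Once this is in hand, the rest of the argument is a one-line application of $S^{1}$-invariance, so I do not expect any real obstacle.
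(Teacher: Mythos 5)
Your proposal is correct and follows essentially the same route as the paper: you identify $BL$ (up to sign) as the Hamiltonian generating the $S^{1}$-action with infinitesimal generator $(iz,iw)$, note that $(K.S.)^{*}F$ is invariant under that action, and conclude that $BL$ is a first integral so its zero level $\Sigma^{0}$ is flow-invariant. The only difference is cosmetic — you spell out the $S^{1}$-invariance computation $e^{-i\vartheta} i e^{i\vartheta}=i$, which the paper takes for granted — so no changes are needed.
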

 \begin{proof} The $S^{1}$-action $\vartheta \cdot (z, w) \mapsto (e^{i \vartheta} z, e^{i \vartheta} w)$ of the Kustaanheimo-Stiefel angle on $T^{*} \mathbb{H} \setminus \{(0,0)\}$ is generated by the vector field $(i z, i w) \in T_{(z,w)}T^{*} \mathbb{H} \setminus \{(0,0)\}$, which is exactly the Hamiltonian vector field of $-BL$. Since the function ${(K. S.)}^{*} F$ is invariant under this $S^{1}$-action, the Hamiltonian flow of ${(K. S.)}^{*} F$ and $BL$ commute, and $BL$  is a first integral of the Hamiltonian flow of ${(K. S.)}^{*} F$.
\end{proof}

A Hamiltonian system $(T^*\mathbb{IH}, Re\{d \bar{P} \wedge d Q\}, F)$ is thus transformed into a Hamiltonian system $(T^*\mathbb{H}, Re\{d \bar{w} \wedge d z\}, {(K. S.)}^{*} F)$ which keeps $\Sigma^{0}$ invariant.

\subsection{Regularization of the spatial Kepler problem} \label{K.S. reg of spatial Kep prob}

The Hamiltonian of the spatial Kepler problem with mass parameters $(\mu_{0}, M_{0})$ is  of the form
$$T(P,Q)= \dfrac{1}{2 \mu_{0}} \|P\|^2 + \dfrac{\mu_{0} M_{0}}{\|Q\|},$$
where $(P,Q) \in T^*(\R^3\setminus \{0\}) \cong T^* (\mathbb{IH} \setminus \{0\})$.  We know that all negative energy levels of $T(P, Q)$ are orbitally conjugate to each other, and the flow is singular at $\{Q=0\}$. 

For any $f>0$ fixed, we change the time from $t$ to $\tau$ such that $\|Q\| \, d \tau = d t$ on the negative energy surface 
$\{T+f=0\}.$ 
In the new time variable $\tau$, the flow on $\{T+f=0\}$ is the (restricted) Hamiltonian flow of the function $\|Q\|(T + f)$, in which the velocities {remain} bounded at the limit $\{Q=0\}$. Finally, pulling back the Hamiltonian $\|Q\|(T + f)$ by $K. S.$, we find
$$
{(K. S.)}^{*}(\|Q\|(T + f))=K(z,w)=|z|^2(T(P,Q)+f) =\dfrac{1}{8 \mu_{0}}|w|^2+f|z|^2-\mu_{0} M_{0},
$$
{which describes four harmonic oscillators in $1:1:1:1$-resonance, and is well defined on the whole $T^*\mathbb{H}$ and in particular near the codimension-4 submanifold $\{z=0\}$ corresponding to the collisions of the Kepler problem.


Note that it is only on $\Sigma^{0}$ that {on its zero-energy hypersurface,} the dynamics of $K(z,w)$ extends the dynamics of the spatial Kepler problem. 
Reducing by the $S^1$-action of the Kustaanheimo-Stiefel angle, it defines a regular system on the \emph{regularized phase space} $V^0$. 
The zero-energy submanifold of the reduced space in $V^0$ is a compactification of the given Kepler (negative) energy manifold. It is obtained by adding ``at infinity" a 2-dimensional (hence codimension-3) sphere corresponding to all possible directions of collision.}

\begin{lem}The compactification of the energy surface of the spatial Kepler problem  determined by $K.S.$ is diffeomorphic to $\mathbb{S}^2 \times \mathbb{S}^3$.
\end{lem}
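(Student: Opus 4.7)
The plan is to realize the compactified Kepler energy surface as the total space of a smooth $\mathbb{S}^2$-fibre bundle over $\mathbb{S}^3$, and then to conclude by a triviality argument. To begin, I would observe that $K(z,w)=\tfrac{1}{8\mu_0}|w|^2+f|z|^2-\mu_0 M_0$ is the difference of a positive-definite quadratic form on $\mathbb{H}^2\cong\mathbb{R}^8$ and the positive constant $\mu_0 M_0$, so after a linear rescaling of $z$ and $w$ its zero level $\{K=0\}$ is a round $7$-sphere. Since the origin is absent from $\{K=0\}$ and the $S^1_{K.S.}$-action is free on $T^*\mathbb{H}\setminus\{0\}$, the compactified energy surface is $(\mathbb{S}^7\cap\{BL=0\})/S^1_{K.S.}$.

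Next, I would introduce the $S^1_{K.S.}$-invariant map $G(z,w):=\bigl(2\bar z i w,\ |z|^2-|w|^2\bigr)\in\mathbb{H}\oplus\mathbb{R}$, whose invariance follows from $e^{-i\vartheta} i e^{i\vartheta}=i$ and $|e^{i\vartheta}z|=|z|$. Multiplicativity of the quaternionic norm yields the identity $|2\bar z iw|^2+(|z|^2-|w|^2)^2=(|z|^2+|w|^2)^2$, so $G$ sends $\mathbb{S}^7$ onto a round sphere in $\mathbb{H}\oplus\mathbb{R}$. On $\{BL=0\}$ the quaternion $\bar z i w$ is purely imaginary, hence $G$ takes values in $\mathbb{IH}\oplus\mathbb{R}\cong\mathbb{R}^4$, yielding a map $G:\mathbb{S}^7\cap\{BL=0\}\to\mathbb{S}^3$.

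Over $(A,t)\in\mathbb{S}^3$ the fibre consists of pairs $(z,w)$ with $|z|^2$ and $|w|^2$ determined by $t$ and $|z|^2+|w|^2=\mathrm{const}$, and satisfying $\bar z i w=A/2$; for $|z|>0$ the latter gives $w=-izA/(2|z|^2)$. A tangent-space computation (which must be carried out separately along the polar loci $|z|=0$ and $|w|=0$) shows that $G$ is a surjective submersion whose fibre is diffeomorphic to $\mathbb{S}^3$. Since $S^1_{K.S.}$ acts on each such fibre as the left Hopf action $z\mapsto e^{i\vartheta}z$, the induced map $\overline{G}:(\mathbb{S}^7\cap\{BL=0\})/S^1_{K.S.}\to\mathbb{S}^3$ is a smooth $\mathbb{S}^2$-fibre bundle.

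To finish, I would invoke the classical fact that $\mathbb{S}^2$-fibre bundles over $\mathbb{S}^3$ are classified by $\pi_3(B\mathrm{Diff}(\mathbb{S}^2))=\pi_2(\mathrm{Diff}(\mathbb{S}^2))=\pi_2(O(3))=0$, the middle equality being Smale's theorem; thus $\overline{G}$ is trivializable and the total space is diffeomorphic to $\mathbb{S}^3\times\mathbb{S}^2$. The main obstacle I expect is verifying that $G$ is a smooth submersion along the two polar loci, since the explicit parametrization $w=-izA/(2|z|^2)$ degenerates there and the tangent-space structure must be worked out by hand. A more conceptual alternative that sidesteps this entirely is to use Kummer's equivalence of the Kustaanheimo--Stiefel regularization with Moser's: by Moser's theorem the compactified negative-energy Kepler surface is diffeomorphic to the unit cotangent bundle $ST^*\mathbb{S}^3$, and parallelizability of the Lie group $\mathbb{S}^3$ gives $ST^*\mathbb{S}^3\cong\mathbb{S}^3\times\mathbb{S}^2$.
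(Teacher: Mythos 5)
Your argument is essentially correct, but it follows a genuinely different route from the paper. The paper also starts from the observation that the regularized zero-energy level is an ellipsoid $\cong\mathbb{S}^{7}$, but then it uses that $BL$ is a quadratic form of index $4$ to identify $\{K=0\}\cap\Sigma$ directly with $\mathbb{S}^{3}\times\mathbb{S}^{3}$, on which the Kustaanheimo--Stiefel circle acts diagonally by left multiplication; the key trick is the diffeomorphism $(x,y)\mapsto(x,x^{-1}y)$ of $\mathbb{S}^{3}\times\mathbb{S}^{3}$, which transports the diagonal action to an action on the first factor only, so that quotienting that factor by the Hopf fibration exhibits the quotient as an explicit product $\mathbb{S}^{2}\times\mathbb{S}^{3}$ --- no bundle classification is needed. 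You instead build the invariant map $G(z,w)=\bigl(2\bar z i w,\,|z|^{2}-|w|^{2}\bigr)$, realize the quotient as an $\mathbb{S}^{2}$-bundle over $\mathbb{S}^{3}$, and then trivialize it via $\pi_{2}(\mathrm{Diff}(\mathbb{S}^{2}))=\pi_{2}(O(3))=0$, which invokes Ehresmann's fibration theorem and Smale's theorem --- much heavier machinery than the paper's two-line untwisting, though your map $G$ is attractive in that it is explicit, manifestly $S^{1}$-invariant, and closely tied to the K.S. map itself (its first component is $4|z|^{2}\,K.S.$ in the fibre variable). The step you flag at the polar loci is not a real obstacle: at a point $(z,0)$ the tangent space to $\mathbb{S}^{7}\cap\Sigma$ is cut out by $Re\{\bar z\,\delta z\}=0$ and $Re\{\bar z i\,\delta w\}=0$, and $dG(\delta z,\delta w)=\bigl(2\bar z i\,\delta w,\,2Re\{\bar z\,\delta z\}\bigr)$ surjects onto $\mathbb{IH}\oplus\{0\}$, the tangent space of $\mathbb{S}^{3}$ at the pole $(0,1)$, so $G$ is indeed a submersion there (and symmetrically at $|z|=0$). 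Your alternative via Kummer's identification with Moser's regularization and $ST^{*}\mathbb{S}^{3}\cong\mathbb{S}^{3}\times\mathbb{S}^{2}$ is also legitimate but outsources the content to cited results, including the compatibility of the two compactifications, which the paper's self-contained argument avoids.
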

\begin{proof}
{The zero-energy surface of $K(z,w)= f |z|^2 + \dfrac{1}{8 \mu_{0}}|w|^2-\mu_{0} M_{0}$
is diffeomorphic to $\mathbb{S}^7$. Its intersection with the quadratic cone $\Sigma=\{(z,w):BL (z,w)=0\}$, whose index is 4, is diffeomorphic to $\mathbb{S}^3 \times \mathbb{S}^3$.}
The group $S^{1}$ acts diagonally on the intersection by 
  $$\vartheta \cdot (x,y) = (e^{i\vartheta} x, e^{i\vartheta} y), \vartheta \in \R/2 \pi \Z, (x,y) \in \mathbb{S}^3 \times \mathbb{S}^3 \subset \mathbb{H} \times \mathbb{H}.$$
  
   In order to calculate the quotient of $\mathbb{S}^3 \times \mathbb{S}^3$ by this $S^{1}$-action, we apply the diffeomorphism $(x,y) \rightarrow (x,x^{-1} y)$ from $\mathbb{S}^3 \times \mathbb{S}^3$ to itself. The diagonal $S^{1}$-action on the source space $\mathbb{S}^3 \times \mathbb{S}^3$ induces an $S^{1}$-action on the target space
$$\vartheta \cdot (x, y)\mapsto (e^{i\vartheta} x, y), \vartheta \in \R/2 \pi \Z.$$ 

The quotient of the first factor $\mathbb{S}^3$ by the $S^1$-fibers of the Hopf mapping
$$\mathbb{H} \to \mathbb{IH} \quad x \mapsto \bar{x} i x$$
being diffeomorphic to $\mathbb{S}^{2}$, the quotient of $\mathbb{S}^{3} \times \mathbb{S}^{3}$ by the $\mathbb{S}^{1}$-action is thus diffeomorphic to $\mathbb{S}^{2} \times \mathbb{S}^{3}$. 
\end{proof}

This method of regularizing the collision of the spatial Kepler problem is called \emph{Kustaanheimo-Stiefel regularization}. 
Let us sum up these discussions by a diagram:

\begin{equation*}
\xymatrix@C=1.3pc@R=2pc{
  & \ar[ld] \ar[d] S^{1} &\\
T^{*} (\mathbb{H} \setminus \{0\}) \ar[dr]^{K.S.} \ar@/^-3pc/[ddrr]^{K(z,w)}& \Sigma^{1} (\subset \Sigma^{0})  \ar@{_{(}->}[l]\ar[r] \ar[dr]^{K.S.} & (V^{1} (\subset V^{0}), \omega_{1}) \ar[d]^{symplectic} \\
&\underset{(Q, \quad P)}{(\mathbb{IH} \setminus \{0\}) \times \mathbb{H}}  \ar@/^-1pc/[dr]^{\|Q\|(T(P, Q)+f)}& (T^{*} (\mathbb{IH} \setminus \{0\}), Re\{d \bar{Q} \wedge d P\})\ar[d]^{\|Q\|(T(P, Q)+f)} \ar@{_{(}->}[l]\\
& & \R\\
}
\end{equation*}

\subsection{Dynamics in the physical space}
{It is only on the regularized zero-energy level that the regularized dynamics extends the Kepler dynamics after the time has been slowed down. The regularized dynamics with non-zero regularized energy thus appears to be irrelated to the Kepler problem. Nevertheless, we notice that:}
\begin{lem} \label{EllipseCorresponding}
For any regularized energy $\tilde{f}$ satisfying $\tilde{f} > -\mu_{0} M_{0}$, the projections of the orbits of the regularized Kepler flow in the physical space are Keplerian ellipses.
\end{lem}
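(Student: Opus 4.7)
The plan is to reinterpret the regularized flow on any level $\{K = \tilde{f}\}$ as the Kustaanheimo--Stiefel regularized flow of a \emph{modified} Kepler problem on its zero-energy level, and then invoke Kepler's first law.

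First, I would introduce the auxiliary Hamiltonian
\[
K'(z,w) := \frac{|w|^{2}}{8\mu_{0}} + f|z|^{2} - \mu_{0} M_{0}', \qquad M_{0}' := M_{0} + \tilde{f}/\mu_{0},
\]
which differs from $K$ by the additive constant $-\tilde{f}$. Hence $K$ and $K'$ generate the same Hamiltonian flow and $\{K = \tilde{f}\} = \{K' = 0\}$. By the very derivation of Subsection~\ref{K.S. reg of spatial Kep prob} applied to the Kepler problem with parameters $(\mu_{0}, M_{0}')$ at physical energy $-f$, $K'$ is precisely the corresponding Kustaanheimo--Stiefel regularized Hamiltonian.

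Second, restricting to the invariant set $\Sigma^{0} = \{BL = 0\}$ and passing to the quotient $V^{0}$, the mapping $K.S.$ sends the zero-level of $K'$ onto the $-f$-energy level of the modified Kepler problem in $T^{*}(\mathbb{IH}\setminus\{0\})$ (on the dense open $V^{1}$), the time reparameterization being $|Q|\,d\tau = dt$. Thus an orbit of the regularized flow on $\{K = \tilde{f}\}$ projects via $K.S.$ to an orbit of the Kepler problem with parameters $(\mu_{0}, M_{0}')$ at energy $-f$.

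Finally, the hypothesis $\tilde{f} > -\mu_{0}M_{0}$ is exactly $\mu_{0} M_{0}' > 0$, guaranteeing that the modified Kepler problem is attractive; combined with the negative energy $-f < 0$, the orbits are bound, and Kepler's first law yields that they are ellipses with the origin at one focus — that is, Keplerian ellipses in the physical space $\mathbb{IH}\cong\R^{3}$. The main obstacle is essentially bookkeeping: one must verify carefully that $K$ and $K'$ differ only by a constant and that the $K.S.$ identification $V^{1} \cong T^{*}(\mathbb{IH}\setminus\{0\})$ intertwines the two dynamics on corresponding levels; beyond this, the result reduces to the classical first law of Kepler.
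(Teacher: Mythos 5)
Your proposal is correct and follows essentially the same route as the paper: the paper rewrites the level equation $K=\tilde{f}$ directly as $\dfrac{\|P\|^{2}}{2\mu_{0}}-\dfrac{\mu_{0}M_{0}+\tilde{f}}{\|Q\|}=-f$, i.e.\ the Kepler problem with the modified mass $M_{0}+\tilde{f}/\mu_{0}$ at negative energy $-f$, which is exactly your reinterpretation of $\{K=\tilde{f}\}$ as the zero level of the regularized modified problem, with the condition $\tilde{f}>-\mu_{0}M_{0}$ ensuring attractivity. Your bookkeeping via $K'$ and the $K.S.$ identification is just a more explicit phrasing of the same argument.
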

\begin{proof}
The equation
$$K=\dfrac{|w|^2}{8 \mu_{0}}+f|z|^2-\mu_{0} M_{0}=\tilde{f} $$
is equivalent to
 $$\|Q\|(\dfrac{\|P\|^2}{2 \mu_{0}} - \dfrac{\mu_{0} M_{0}+\tilde{f} }{\|Q\|}+f)=0,$$
 that is
 $$\dfrac{\|P\|^2}{2 \mu_{0}}-\dfrac{\mu_{0} M_{0}+\tilde{f}}{\|Q\|}=-f.$$
By assumption $\mu_{0} M_{0} + \tilde{f} >0$. The Hamiltonian flows of the left hand side in the above equality and $T(P, Q)$ are thus the same up to time parametrization. Since the orbits of the Keplerian problem with negative energy are ellipses,  the projections in the physical space of the orbits of the regularized Kepler flow are also ellipses as well. 
\end{proof}

We call these ellipses \emph{KS-ellipses}, and call the Keplerian ellipses of 
$$T(P, Q)=\dfrac{\|P\|^2}{2 \mu_{0}}-\dfrac{\mu_{0} M_{0}}{\|Q\|}$$
\emph{initial ellipses}. From Lemma \ref{EllipseCorresponding}, we see that {corresponding to the same initial condition $(P,Q)$, the KS-ellipse is just the  (initial) Keplerian ellipse, after changing the mass $M_{0}$ of the Kepler problem to $M_{0}+\tilde{f}/\mu_{0}$.}

\subsection{Levi-Civita Planes}

In $\mathbb{H}$, the orbits of the regularized Kepler problem $K(z,w)$ lie in planes spanned by two vectors $v_1, v_2 \in \mathbb{H}$ satisfying the bilinear relation
 $BL(v_1,v_2)=0.$ Such a plane is called a \emph{Levi-Civita} plane. 



\begin{lem}\label{Cor: L-C Hopf}
Suppose $x$ and $y$ are unit quaternions satisfying $BL(x,y) = 0$ and $\langle x,y \rangle = 0$, then  $\bar{x} i x = - \bar{y} i y$, and $\dfrac{1}{2}(\bar{x} i y +\bar{y} i x)= \bar{x} i y$ is a unit quaternion linearly independent of $\bar{x} i x = - \bar{y} i y$.

\end{lem}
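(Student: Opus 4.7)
The plan is to parametrize $y$ relative to $x$ via the auxiliary quaternion $q := \bar{x} y$, translate the two hypotheses into orthogonality statements in $\mathbb{IH} \cong \R^{3}$, and then read off both conclusions from the cross-product algebra of purely imaginary unit quaternions.

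First I would observe that $|x|=1$ implies $x\bar{x}=1$, so $y = x q$. The assumption $\langle x,y\rangle = Re\{\bar{x}y\} = 0$ says the real part of $q$ vanishes, while $|q|=|\bar{x}||y|=1$ makes $q$ a purely imaginary unit quaternion (in particular $\bar{q}=-q$). Setting $u := \bar{x} i x$, which is automatically a unit purely imaginary quaternion, one then has
\[
\bar{x} i y = (\bar{x} i x)(\bar{x} y) = uq,
\]
and the bilinear relation $BL(x,y) = Re\{\bar{x} i y\} = Re\{uq\} = -u\cdot q = 0$ says precisely that $u$ and $q$ are orthogonal in $\R^{3}$.

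The first conclusion $\bar{x} i x = -\bar{y} i y$ would then follow from $\bar{y} i y = \bar{q}(\bar{x} i x) q = -q u q$: since $q,u$ are orthogonal purely imaginary unit quaternions, $qu = q\times u$, and computing $(q\times u)q = -(q\times u)\cdot q + (q\times u)\times q = 0 + u = u$ gives $quq = u$, hence $\bar{y} i y = -u$. The stated identity $\tfrac{1}{2}(\bar{x}iy + \bar{y}ix) = \bar{x} i y$ is equivalent to $\bar{x} i y = \bar{y} i x$, which I would obtain by conjugating: $\overline{\bar{x} i y} = -\bar{y} i x$, and since $\bar{x} i y$ is purely imaginary (its real part is exactly $BL(x,y)=0$) this conjugate also equals $-\bar{x}iy$.

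For the linear independence, $\bar{x} i y = uq = q\times u$ is a unit quaternion orthogonal to $u = \bar{x} i x$ in $\R^{3}$, and hence is not a real multiple of it. The only nontrivial step in the whole argument is the purely-imaginary-quaternion identity $quq = u$ for orthonormal $q,u$; everything else is direct book-keeping using $|x|=|y|=1$ together with the definition of $BL$.
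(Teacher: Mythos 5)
Your proof is correct, and it takes a genuinely different route from the paper's. The paper works directly with the two bilinear relations: it rewrites $BL(x,y)=0$ as $\bar{x}iy=\bar{y}ix$ (your conjugation argument for $\tfrac12(\bar{x}iy+\bar{y}ix)=\bar{x}iy$ is exactly this step), multiplies this against $\bar{x}y-\bar{y}x$ and uses $\langle x,y\rangle=Re\{\bar{x}y\}=0$ to arrive at $|y|^{2}\,\bar{x}ix+|x|^{2}\,\bar{y}iy=0$, and gets linear independence by noting that left multiplication by $\bar{x}i$ is injective while $x,y$ are independent. You instead set $q=\bar{x}y$, so that $y=xq$ with $q$ a purely imaginary unit quaternion orthogonal in $\mathbb{IH}\cong\R^{3}$ to $u=\bar{x}ix$, and read the first identity off from $\bar{y}iy=\bar{q}uq=-quq=-u$, i.e.\ from the fact that conjugation by a purely imaginary unit quaternion $q$ is the half-turn about the axis $q$, which reverses every vector orthogonal to $q$. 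This is more geometric: it exhibits $\bar{y}iy$ as the image of $\bar{x}ix$ under a rotation by $\pi$ about $\bar{x}y$, and it yields the orthogonality (not merely the independence) of $\bar{x}ix$ and $\bar{x}iy$, slightly more than the paper records; the paper's computation, in exchange, is a short self-contained manipulation needing no auxiliary variable. One harmless slip in your last paragraph: $uq=u\times q=-q\times u$, not $q\times u$; the sign affects neither the unit length nor the orthogonality to $u$, so your independence conclusion stands.
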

\begin{proof}
The bilinear relation 
$$
BL(x,y)=Re\{\bar{x} i y \} = \bar{x} i y - \bar{y} i x  =0,
$$
implies
$$
 (\bar{x} y -\bar{y} x ) (\bar{x} i y - \bar{y} i x )=0,
$$
that is 
$$|y|^2 \bar{x} i x +|x|^2 \bar{y} i y =\bar{y} x \bar{y} i x + \bar{x} y \bar{x} i y,$$
$$2|y|^2 \bar{x} i x +2|x|^2 \bar{y} i y = 2(\bar{y} x + \bar{x} y)(\bar{x} i y +\bar{y} i x),$$ 
thus
$$|y|^2 \bar{x} i x +|x|^2 \bar{y} i y = \langle x,y \rangle (\bar{x} i y +\bar{y} i x)=0.$$

Since $x$ and $y$ are unit quaternions, $(\bar{x} i y +\bar{y} i x)/2= \bar{x} i y$ is also a unit quaternion. Moreover, since $x$ and $y$ are linearly independent and $\bar{x} i$ is not zero, $\bar{x} i x$ and $\bar{x} i y$ are also linearly independent.
\end{proof}

\begin{prop} 
The Hopf mapping
$$\mathbb{H} \to \mathbb{IH} \quad z \mapsto \bar{z} i z$$
 sends a Levi-Civita plane to a plane containing the origin in $\mathbb{IH}$. 
On the other hand, any plane containing the origin in $\mathbb{IH}$ is exactly the image of a $\mathbf{P}^{1}$-family of Levi-Civita planes.
\begin{proof} 
The first assertion follows from Lemma \ref{Cor: L-C Hopf}. 
To prove the second, let $\mathbf{e}_{1}, \mathbf{e}_{2}$ be an orthogonal basis of a plane in $\mathbb{IH}$. Any rotation sending $i$ to $\mathbf{e}_{1}$ determines a unit quaternion $x$ satisfying $\mathbf{e}_{1}=\bar{x} i x$. The unit quaternion $y=-i x \mathbf{e}_{2}$ thus satisfies $\mathbf{e}_{2}=\bar{x} i y$. Since $\mathbf{e}_{2}$ is purely imaginary, $$BL(x, y) =Re\{\mathbf{e}_{2}\}=0.$$ Moreover, we have $$BL(x,y)=Re\{\bar{x} y \}=- Re\{ \bar{x} i x \bar{x} i y\}=Re\{ \bar{\mathbf{e}}_{1} \mathbf{e}_{2}\}=\langle \mathbf{e}_{1}, \mathbf{e}_{2} \rangle=0.$$  The plane spanned by $x$ and $y$ is thus a Levi-Civita plane. 

Once $(x, y)$ is given, the $S^{1}$ family $\{(e^{i \vartheta} x, e^{i \vartheta} y), \vartheta \in \R/2 \pi \Z \}$ corresponds to the same $(\mathbf{e}_{1}, \mathbf{e}_{2})$, in which the pair $(e^{i \vartheta} x, e^{i \vartheta} y)$ and $(e^{i \vartheta+i\pi} x, e^{i \vartheta+i\pi} y)$ determine the same oriented Levi-Civita plane. Therefore for each oriented two-plane in $\mathbb{IH}$ passing through the origin, its pre-image is a $\mathbf{P}^{1}$-family of oriented Levi-Civita planes.

{The fibres of the Hopf map are $S^{1}$-circles each of which intersects a Levi-Civita plane in $0$ or $2$ points. Therefore the pre-image of any plane in $\mathbb{IH}$ consists exactly in a $\mathbf{P}^{1}$-family of Levi-Civita planes.}
\end{proof}
\end{prop}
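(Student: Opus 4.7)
The plan is to dispatch the two directions separately, using Lemma~\ref{Cor: L-C Hopf} as the main workhorse for the first.

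For the first direction, I take an orthonormal basis $(x, y)$ of a given Levi-Civita plane; this is permitted because $BL(\cdot,\cdot)$ is $\R$-bilinear with $BL(v,v)\equiv 0$, so every pair of vectors in a Levi-Civita plane again satisfies the bilinear relation, and real Gram--Schmidt stays inside the plane. Lemma~\ref{Cor: L-C Hopf} then supplies the identity $\bar{x} i x = -\bar{y} i y$ together with a second unit purely-imaginary quaternion $\bar{x} i y$ linearly independent of $\bar{x} i x$. For an arbitrary $z = ax + by$ in the plane I would expand
\begin{equation*}
\bar{z} i z = a^{2}\bar{x} i x + ab(\bar{x} i y + \bar{y} i x) + b^{2}\bar{y} i y = (a^{2}-b^{2})\bar{x} i x + 2ab\,\bar{x} i y,
\end{equation*}
using both $\bar{y} i y = -\bar{x} i x$ and $\bar{y} i x = \bar{x} i y$; the second identity follows from $BL(x,y)=0$, since $\bar{x} i y$ is then purely imaginary and thus equal to the negative of its own quaternionic conjugate. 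This places the image of the Levi-Civita plane inside the 2-plane through the origin spanned by $\bar{x} i x$ and $\bar{x} i y$, and surjectivity of $(a,b)\mapsto (a^{2}-b^{2},2ab)$ onto $\R^{2}$ shows the image equals that plane.

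For the reverse direction, given an oriented 2-plane through the origin in $\mathbb{IH}$ with orthonormal basis $(\mathbf{e}_{1},\mathbf{e}_{2})$, I would first exhibit one Levi-Civita preimage. By surjectivity of the Hopf map $\mathbb{S}^{3}\to\mathbb{S}^{2}$ I pick a unit quaternion $x$ with $\bar{x} i x = \mathbf{e}_{1}$ and set $y := -ix\mathbf{e}_{2}$. A direct quaternionic computation yields $\bar{x} i y = \mathbf{e}_{2}$, hence $BL(x,y)=Re\{\mathbf{e}_{2}\}=0$, while $\bar{x} y = -\mathbf{e}_{1}\mathbf{e}_{2}$ has real part $\langle \mathbf{e}_{1},\mathbf{e}_{2}\rangle = 0$, giving $x\perp y$. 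Thus $(x,y)$ spans a Levi-Civita plane projecting onto the given target under the Hopf map.

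To count preimages, I would observe that the $S^{1}$-action $(x,y)\mapsto(e^{i\vartheta}x,e^{i\vartheta}y)$ leaves the Hopf image invariant (since $i$ commutes with $e^{i\vartheta}$), while $\vartheta$ and $\vartheta+\pi$ yield the same oriented 2-plane in $\mathbb{H}$; this supplies an $S^{1}/\{\pm 1\}\cong\mathbf{P}^{1}$-family of Levi-Civita preimages. To conclude this is the whole preimage, I would check that each Hopf fibre $\{e^{i\vartheta}p\}$ through a nonzero point $p$ of a Levi-Civita plane meets that plane only in $\{p,-p\}$: indeed, $ip$ cannot lie in a Levi-Civita plane containing $p$, since $BL(p,ip)=-|p|^{2}\neq 0$ rules out $(p,ip)$ as a candidate Levi-Civita basis. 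The main technical obstacle is the identity $\bar{y} i x = \bar{x} i y$ in the first paragraph, which is precisely where the bilinear relation does its work; the remaining verifications are short quaternionic manipulations.
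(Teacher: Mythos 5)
Your proposal is correct and follows essentially the same route as the paper: the first assertion via Lemma \ref{Cor: L-C Hopf} (with the quadratic expansion $\bar z i z=(a^2-b^2)\bar x i x+2ab\,\bar x i y$ made explicit), the same construction $y=-ix\mathbf{e}_2$ for a preimage plane, and the same counting via the $S^{1}/\{\pm 1\}\cong\mathbf{P}^{1}$ family together with the fact that a Hopf fibre meets a Levi-Civita plane in $0$ or $2$ points. Your check of that last fact through $BL(p,ip)=-|p|^{2}\neq 0$ simply supplies a detail the paper asserts without proof.
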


An important relation between the Kustaanheimo-Stiefel transformation and the usual Levi-Civita transformation ($L.C.$) is the following:

\begin{prop}\label{prop: identification} Given a Levi-Civita plane $E$, there exist identifications to $\C$ of $E$ and of its image under the Hopf map, which make the restriction of the Hopf map into the form $$\C \to \C, \quad z \mapsto z^{2}.$$
Moreover, the restriction of $K. S.$ to $T^{*} E$ {becomes} 
$$L. C. : T^{*} \C \to T^{*} \C, \quad (z, w) \to (z^{2}, \dfrac{w}{2 \bar{z}}).$$
\end{prop}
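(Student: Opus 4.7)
The plan is to use the orthonormal frame provided by Lemma \ref{Cor: L-C Hopf} to pin down concrete identifications of $E$ and of its Hopf image with $\C$, and then verify the two claims by direct bilinear computations.

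First I would apply Gram-Schmidt to any basis of the Levi-Civita plane $E$ to obtain unit vectors $x, y \in E$ with $\langle x, y \rangle = 0$. The defining relation $BL(x, y) = 0$ persists, so Lemma \ref{Cor: L-C Hopf} applies and produces two purely imaginary unit quaternions
$$\mathbf{e}_{1} := \bar{x} i x = - \bar{y} i y, \qquad \mathbf{e}_{2} := \bar{x} i y = \bar{y} i x.$$
A short additional check using $|x|=1$ gives $\mathbf{e}_{1} \mathbf{e}_{2} = - \bar{x} y$, hence $\langle \mathbf{e}_{1}, \mathbf{e}_{2} \rangle = Re\{\bar{x} y\} = \langle x, y \rangle = 0$, so $(\mathbf{e}_{1}, \mathbf{e}_{2})$ is an orthonormal basis of the image plane in $\mathbb{IH}$. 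I would then declare the identifications $E \cong \C$ by $\alpha x + \beta y \leftrightarrow \alpha + \beta\, i_{\C}$, and the image plane $\cong \C$ by $u \mathbf{e}_{1} + v \mathbf{e}_{2} \leftrightarrow u + v\, i_{\C}$, where $i_{\C}$ denotes the complex imaginary unit (distinct from the quaternion $i$). Expanding $\bar{z} i z$ bilinearly for $z = \alpha x + \beta y$ then yields
\begin{align*}
\bar{z} i z
&= \alpha^{2}\, \bar{x} i x + \alpha \beta (\bar{x} i y + \bar{y} i x) + \beta^{2}\, \bar{y} i y \\
&= (\alpha^{2} - \beta^{2})\, \mathbf{e}_{1} + 2 \alpha \beta\, \mathbf{e}_{2},
\end{align*}
which is the image of $(\alpha + i_{\C} \beta)^{2} = (\alpha^{2} - \beta^{2}) + 2 i_{\C} \alpha \beta$ under the second identification. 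This proves the first claim.

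For the $K.S.$ statement, I would write $w = \gamma x + \delta y$ in $T^{*}_{z} E$, using the ambient inner product on $\mathbb{H}$ to identify $E^{*}$ with $E$ through the basis $(x, y)$. The same expansion gives
$$\bar{z} i w = (\alpha\gamma - \beta\delta)\, \mathbf{e}_{1} + (\alpha\delta + \beta\gamma)\, \mathbf{e}_{2},$$
whose real part vanishes (so $T^{*}E \subset \Sigma$ automatically) and which corresponds under the identifications to the complex product $z_{\C} w_{\C}$ with $z_{\C} = \alpha + i_{\C} \beta$, $w_{\C} = \gamma + i_{\C} \delta$. Dividing by $2 |z|^{2} = 2 z_{\C} \bar{z}_{\C}$ gives the second component of $K.S.$ as $w_{\C}/(2 \bar{z}_{\C})$, while the first component $z_{\C}^{2}$ was already established; this is exactly $L.C.$

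I do not anticipate any genuine obstacle. Once the orthonormal frame $(\mathbf{e}_{1}, \mathbf{e}_{2})$ is in hand, the proposition reduces to the elementary complex identities $(\alpha + i_{\C} \beta)^{2} = (\alpha^{2} - \beta^{2}) + 2 i_{\C} \alpha \beta$ and $(\alpha + i_{\C}\beta)(\gamma + i_{\C}\delta) = (\alpha\gamma - \beta\delta) + i_{\C}(\alpha\delta + \beta\gamma)$. The only point requiring some care is bookkeeping: consistency of orientations of $E$ and its Hopf image, and the explicit identification of $E^{*}$ with $E$ used to restrict the momentum variable of $T^{*}\mathbb{H}$ to $T^{*}E$.
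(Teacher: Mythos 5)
Your proposal is correct and follows essentially the same route as the paper's proof: pick an orthonormal pair spanning $E$, use the relations of Lemma \ref{Cor: L-C Hopf} to expand $\bar{z} i z$ and $\bar{z} i w$ bilinearly, and identify the bases $(x,y)$ and $(\mathbf{e}_{1},\mathbf{e}_{2})$ with $(1, i_{\C})$ so that the formulas become $z\mapsto z^{2}$ and $(z,w)\mapsto (z^{2}, w/(2\bar z))$. Your version merely makes explicit some bookkeeping the paper leaves implicit (Gram--Schmidt, orthogonality of $\mathbf{e}_{1},\mathbf{e}_{2}$, the identification of $T^{*}E$), which is fine.
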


\begin{proof}
Let $z=c_{1} \mathbf{r}_1+ c_{2} \mathbf{r}_2, w=c_{3} \mathbf{r}_1+c_{4} \mathbf{r}_2 \in E \subset \mathbb{H}$, where $c_{1}, c_{2}, c_{3}, c_{4} \in \mathbb{R}$, $\mathbf{r}_1, \mathbf{r}_2 \in E$ are two unit orthogonal quaternions. 
The Hopf map sends $z$ into
$$(c_{1}^2 -c_{2}^2)\bar{\mathbf{r}}_1 i \mathbf{r}_1 + 2 c_{1} c_{2} \bar{\mathbf{r}}_1 i \mathbf{r}_2,$$
$K.S.$ sends $(z, w)$ into
 $$\left((c_{1}^2 - c_{2}^2)\bar{\mathbf{r}}_1 i \mathbf{r}_1 + 2 c_{1} c_{2} \bar{\mathbf{r}}_1 i \mathbf{r}_2,\dfrac{( c_{1} c_{3}-  c_{2} c_{4})\bar{\mathbf{r}}_1i \mathbf{r}_1+(c_{1} c_{4} + c_{2} c_{3})\bar{\mathbf{r}}_1 i \mathbf{r}_2}{2(c_{1}^2 + c_{2}^2)}\right).$$
It is thus enough to make the identifications $\mathbf{r}_1 \sim \bar{\mathbf{r}}_1 i  \mathbf{r}_1 \sim 1$, $\mathbf{r}_2 \sim \bar{\mathbf{r}}_1 i \mathbf{r}_2 \sim i$.
\end{proof}

The restriction of the Kustaaheimo-Stiefel regularized Kepler problem to the cotangent bundle of a Levi-Civita plane is thus just the Levi-Civita regularized Kepler problem.

\section{LCF coordinates} \label{Section: CF}
\subsection{Planar case}
Let us first recall a set of Delaunay-like action-angle coordinates of the Levi-Civita regularized planar Kepler problem (that we have presented in Subsection \ref{Subsection: L-C Regularization}) obtained in \cite{DoubleInnerCollision}. In this subsection, the variables $z, w$ are complex numbers instead of being quaternions.

We first switch to the symplectic coordinates 
$$(W,Z)=\bigl(\dfrac{w}{\sqrt[4]{8 \mu_{0} f}}, \sqrt[4]{8 \mu_{0} f} z\bigr),$$
in which the function $K=K(z, w)$ is transformed into
$$K=\sqrt{\dfrac{f}{8 \mu_{1}}} (|Z|^{2}+|W|^{2})-\mu_{0} M_{0}.$$

To diagonalize the associated Hamiltonian vector field, we set
$$
(W,Z)=\bigl(\dfrac{W'+\bar{Z'}}{\sqrt{2}}, \dfrac{W'-\bar{Z'}}{i\sqrt{2}}).
$$

In $(W', Z')$ coordinates
$$
K=\sqrt{\dfrac{f}{8 \mu_{1}}} (|Z'|^{2}+|W'|^{2})-\mu_{0} M_{0},
$$
and the symplectic form is transformed to $\dfrac{i}{2} (d W' \wedge d \bar{W'} + d Z' \wedge d \bar{Z'})$.

We further switch to polar symplectic coordinates $(r_{a}, \theta_{a}, r_{b},\theta_{b})$ defined by
$$
(Z', W')=(\sqrt{2 r_{a}} e^{i \theta_{a}}, \sqrt{2 r_{b}} e^{i \theta_{b}}).
$$

In these coordinates, we have
$$K=\sqrt{\dfrac{f}{2 \mu_{1}}} (r_{a}+r_{b})-\mu_{0} M_{0}.$$

Finally, we set
$$(\mathcal{L},\delta,\mathcal{G},\gamma)=(\dfrac{r_{a}+r_{b}}{2}, \theta_{a}+\theta_{b}, \dfrac{r_{a}-r_{b}}{2}, \theta_{a}-\theta_{b}+\pi).$$
We have in these coordinates
$$K=\mathcal{L} \sqrt{\dfrac{2 f}{\mu_{0}}}-\mu_{0} M_{0},$$
and the symplectic form is transformed into the form $d \mathcal{L} \wedge d \delta + d  \mathcal{G} \wedge d \gamma$.
As has been remarked in \cite{DoubleInnerCollision}, the translation by $\pi$ in the definition of $\gamma$ is due to the reason that one considers the argument of the pericenter of an ellipse rather than its apocentre. 

\subsection{Spatial case}\label{Spatial C-F}  Following \cite{Fthesis}, we define the diffeomorphism $k_{f}$ from $V^{0}$ to itself by the following formula:
$$
k_{f}:(P,Q) \mapsto (P'=\dfrac{P}{\sqrt{2 \mu_{0} f} L},Q)
$$
so that the ellipse determined by $(P,Q)$ in the physical space under the flow of the regularized Hamiltonian $K(z,w)$ coincides with the ellipse determined by $ (P', Q)$ under $T(P,Q)$. We note that in the above formula, {$(P,Q)$ do not determine $(P', Q)$ uniquely} {since $L$, as part of the Delaunay coordinates defined below, is related to the Keplerian energy of the corresponding KS-ellipses with regularized energy $\tilde{f}$, which corresponds to a modification of the masses by $\tilde{f}$} ({Lemma \ref{EllipseCorresponding} and discussions below it}). In particular, $\sqrt{2 \mu_{0} f} L=1$ if and only if $\tilde{f}=0$.
 
{Identifying the space of spatial Keplerian ellipses of fixed semi major axis, possibly {circular or} degenerate {(to a line segment)}, to $\mathbb{S}^2\times \mathbb{S}^2$ (see \cite[Lecture 4]{Albouy}), the mapping $k_{f}$ induces the identity mapping from $\mathbb{S}^{2} \times \mathbb{S}^{2}$ to itself.} However, the mass parameters of the source and the target not being necessarily the same, the symplectic forms on the source and target space do not necessarily agree, hence the identity mapping of $\mathbb{S}^{2} \times \mathbb{S}^{2}$ is not symplectic in general. 

Let $a, e, i$ be {respectively} the semi major axis length, the eccentricity and the inclination\footnote{The use of the same $i$ for the inclination and for the imaginary unit should not cause any ambiguity for the readers.} of an elliptic {solution of the Kepler problem with Hamiltonian $T(P,Q)$}. In terms of the (symplectic) Delaunay coordinates $(L, l, G, g, H, h)$, for which
\begin{equation*} 
\left\{
\begin{array}{ll}L=\mu_0 \sqrt{M_0} \sqrt{a}   & \hbox{circular angular momentum}\\ l  &\hbox{mean anomaly}\\ G = L \sqrt{1-e^2} &\hbox{angular momentum} \\g &\hbox{argument of pericenter} \\ H=G \cos i &\hbox{vertical component of the angular momentum} \\ h &\hbox{longitude of the ascending node{,}}
\end{array}\right.
\end{equation*}
 and the diffeomorphism $k_{f}$, we define {as follows} the \emph{LCF Coordinates}, seen as coordinates on an open subset $\tilde{V}^{1}$ of the regularized phase space $V^{0}$ determined by the conditions that the corresponding KS-ellipse is non-degenerate ({in this case, even if $l$ is replaced by $u$ in this set of coordinates, the orbital plane is nevertheless not well-defined}), non-circular and non-horizontal:
\begin{equation*} 
\left\{
\begin{array}{ll}\mathcal{L}= \dfrac{\sqrt{2 f} L^{2}}{\mu_{0}^{3/2} M_{0}} \circ k_{f}  &\\ \delta=u \circ k_{f} {\hbox{, where $u$ is the eccentric anomaly}}&\\ \mathcal{G} = \dfrac{\sqrt{2 f} L G}{\mu_{0}^{3/2} M_{0}}  \circ k_{f} & \\\gamma=g \circ k_{f} & \\ \mathcal{H}=H & \\ \zeta = h.&
\end{array}
\right.
\end{equation*}
 
On the energy surface $K(z,w)=0$, we have $f=\dfrac{\mu_{0}^{3} M_{0}^{2}}{2 L^{2}}$, therefore {$k_{f}$ induces an identification of the Delaunay coordinates with the corresponding LCF coordinates} 
\begin{equation*} 
\begin{array}{ll}\mathcal{L}= L \circ k_{f}, \quad \delta=u \circ k_{f} , \quad \mathcal{G} = G  \circ k_{f} , \quad \gamma=g \circ k_{f} , \quad \mathcal{H}=H , \quad \zeta = h.
\end{array}
\end{equation*}
(except for the fast angle, which is the eccentric anomaly $u$ in LCF coordinates and {the mean anomaly} $l$ in Delaunay coordinates).

To obtain a simple proof of the symplecticity of the LCF coordinates, we shall use the following ``Rotation Lemma'':

Let $R_{1}^{I}$ be the simultaneous rotation in each factor of $\mathbb{R}^{3} \times \mathbb{R}^{3}$ around the first axis with angle $I$,  $R_{3}^{h}$ be the simultaneous rotation in each factor of $\mathbb{R}^{3} \times \mathbb{R}^{3}$ around the third (``vertical'') axis with angle $h$. Let 
$$ 
(x'_{1},x'_{2},x'_{3},y'_{1},y'_{2},y'_{3})=R_{3}^{h} \circ R_{1}^{I} (x_{1},x_{2},0,y_{1},y_{2},0).$$

\begin{lem}(Rotation Lemma) \label{Rotation Lemma} 
$$
d y'_{1} \wedge d x'_{1} + d y'_{2} \wedge d x'_{2} + d y'_{3} \wedge d x'_{3}= d y_{1} \wedge d x_{1} + d y_{2} \wedge d x_{2}+ d (x'_{1} y'_{2} - x'_{2} y'_{1}) \wedge d h.
$$
\end{lem}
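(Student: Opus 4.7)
The plan is to split the composition $R_3^h \circ R_1^I$ into its two factors and track how the canonical symplectic form of $\R^3 \times \R^3$ transforms at each stage, remembering that both $I$ and $h$ are treated as coordinates (not parameters).

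\textbf{Step 1: handling $R_1^I$.} I would set
\[
(X_1, X_2, X_3, Y_1, Y_2, Y_3) := R_1^I(x_1, x_2, 0, y_1, y_2, 0) = (x_1,\, x_2 \cos I,\, x_2 \sin I,\, y_1,\, y_2 \cos I,\, y_2 \sin I),
\]
differentiate, and compute $\sum_{k=1}^{3} dY_k \wedge dX_k$ directly. The contributions of $dI$ to $dX_2$ and $dY_2$ combine with those in $dX_3$ and $dY_3$; since the original third components vanish, the mixed terms $dx_2 \wedge dI$ and $dy_2 \wedge dI$ appear with coefficients $\sin I \cos I$ of opposite signs from the two planes, and those in $dI \wedge dI$ are zero. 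The identity $\cos^2 I + \sin^2 I = 1$ then gives the clean equality
\[
dY_1 \wedge dX_1 + dY_2 \wedge dX_2 + dY_3 \wedge dX_3 = dy_1 \wedge dx_1 + dy_2 \wedge dx_2.
\]

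\textbf{Step 2: handling $R_3^h$.} Next I would write
\[
x'_1 = X_1 \cos h - X_2 \sin h,\quad x'_2 = X_1 \sin h + X_2 \cos h,\quad x'_3 = X_3,
\]
and analogously for $y'_k$. Taking exterior derivatives I expect each $dx'_k$ and $dy'_k$ to split into a part involving $dX_\bullet$ and $dY_\bullet$ and a part proportional to $dh$ (coming from $-x'_2 dh$, $x'_1 dh$, $-y'_2 dh$, $y'_1 dh$). In the expansion of $\sum_{k} dy'_k \wedge dx'_k$, the $dh$-free part reassembles $\sum_k dY_k \wedge dX_k$ (since at fixed $h$ the rotation is symplectic), while the mixed $dh$-pieces combine, after simplification using $x'_1, x'_2, y'_1, y'_2$, into
\[
d(X_1 Y_2 - X_2 Y_1) \wedge dh.
\]
Finally, since rotations about the third axis preserve $X_1 Y_2 - X_2 Y_1$, this equals $d(x'_1 y'_2 - x'_2 y'_1) \wedge dh$, and combining with Step 1 yields the claimed identity.

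\textbf{Conceptual check and main obstacle.} The whole phenomenon is the familiar one: when a symplectic $S^1$-action is ``twisted in'' through a variable coordinate $h$, the pulled-back symplectic form differs from the flat one by $d\mu \wedge dh$, where $\mu = x_1 y_2 - x_2 y_1$ is the momentum map of the vertical rotation. The same is true for $R_1^I$, but the initial condition that the third components vanish kills the momentum-map contribution for that rotation, which is why only the $d(x'_1 y'_2 - x'_2 y'_1) \wedge dh$ term survives in the final formula. The main obstacle is purely bookkeeping: tracking signs in the eight $dh$-cross-terms and checking that the $dh$-free parts correctly telescope through the factor $\cos^2 + \sin^2 = 1$ at both rotation steps. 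No geometric subtlety is expected beyond that.
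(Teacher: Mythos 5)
Your proof is correct and takes essentially the same route as the paper: the paper's ``elementary calculation'' passes through the same intermediate identity $\sum_k dx'_k\wedge dy'_k = dx_1\wedge dy_1 + dx_2\wedge dy_2 + dh\wedge d\bigl((x_1y_2-x_2y_1)\cos I\bigr)$, which is exactly what your two-step factorization through $R_1^I$ and then $R_3^h$ yields, since $X_1Y_2-X_2Y_1=(x_1y_2-x_2y_1)\cos I = x'_1y'_2-x'_2y'_1$. Your sign bookkeeping and the vanishing of the $dI$-cross-terms (zero angular momentum about the first axis on $\{x_3=y_3=0\}$) both check out.
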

\begin{proof} 

An elementary calculation leads to 
\begin{align*}
  d x'_1 \wedge d y'_1 + d x'_2 \wedge d y'_2 + d x'_3 \wedge d y'_3 
 &=d x_{1} \wedge d y_{1} + d x_{2} \wedge d y_{2} + d h \wedge d ((x_{1} y_{2}-x_{2} y_   {1}) \cdot \cos I) \\
&=d x_{1} \wedge d y_{1} + d x_{2} \wedge d y_{2} + d h \wedge d (x'_{1} y'_{2}-x'_{2} y'_{1}).
\end{align*}
\end{proof}

By Proposition \ref{prop:1}, the mapping $K.S.$ restricts to $L. C.$ on the cotangent bundle of a Levi-Civita plane. Since $(\mathcal{L},\delta,\mathcal{G},\gamma)$ form a set of Darboux coordinates, we deduce from Lemma \ref{Rotation Lemma} that

\begin{prop}\label{Delaunay-like Darboux}
LCF coordinates form a set of Darboux coordinates on $\tilde{V}^{1}$.
\end{prop}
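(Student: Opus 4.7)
The plan is to reduce symplecticity to the planar LCF construction recalled at the beginning of this section and then to incorporate the spatial rotations via the Rotation Lemma.

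Concretely, fix $p \in \tilde V^{1}$, and let $E \subset \mathbb{H}$ be the Levi-Civita plane spanned by the corresponding non-degenerate KS-ellipse. By Proposition \ref{prop: identification}, after identifying $E$ and its Hopf image with $\C$, the restriction of $K.S.$ to $T^{*}E$ is the planar Levi-Civita mapping $L.C.$, and $K(z,w)$ restricts to the planar regularized Kepler Hamiltonian on $T^{*}\C$. The sequence of symplectic transformations recalled in the planar subsection then produces the planar LCF coordinates on $T^{*}E$, which are Darboux for $Re\{d\bar{w}\wedge dz\}|_{T^{*}E}$ by construction.

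To lift this to the full spatial phase space, I would realise the orbital plane through $p$ as the image of a reference horizontal plane under the rotation $R_{3}^{h}\circ R_{1}^{I}$ (with $I=i$ the inclination) acting diagonally on $(Q,P)\in\R^{3}\times\R^{3}$. Writing $(x_{1},x_{2},y_{1},y_{2})$ for the planar coordinates on the horizontal orbital plane and $(x'_{k},y'_{k})_{k=1,2,3}$ for their spatial images, Lemma~\ref{Rotation Lemma} yields
$$\sum_{k=1}^{3} dy'_{k}\wedge dx'_{k} = dy_{1}\wedge dx_{1}+dy_{2}\wedge dx_{2}+d(x'_{1}y'_{2}-x'_{2}y'_{1})\wedge dh.$$
Since $x'_{1}y'_{2}-x'_{2}y'_{1}$ is the vertical component of the spatial angular momentum $Q\times P$, i.e.\ $H=\mathcal{H}$, and $h=\zeta$, the correction term is $d\mathcal{H}\wedge d\zeta$. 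Combined with the previous step, the symplectic form on $\tilde V^{1}$ reads $d\mathcal{L}\wedge d\delta+d\mathcal{G}\wedge d\gamma+d\mathcal{H}\wedge d\zeta$, which is the claim.

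The main obstacle is to identify the restriction of the spatial LCF coordinates $(\mathcal{L},\delta,\mathcal{G},\gamma)$, defined via composition with the auxiliary transformation $k_{f}$, with the intrinsic planar LCF coordinates on $T^{*}E$, and to reconcile this with the fact that $\mathcal{H}=H$ and $\zeta=h$ are defined without $k_{f}$. Since $k_{f}$ merely rescales $P$ by a scalar depending on $L$, it preserves the orbital plane together with the ascending node, so $\zeta=h$ and $\mathcal{H}=H$ pose no ambiguity; the rescalings of $L$ and $G$ encoded in $k_{f}$ are precisely those produced by the planar symplectic sequence $(w,z)\mapsto(W,Z)\mapsto\cdots$ on each orbital plane. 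Once this consistency check is settled, the two steps above combine to show that the LCF coordinates are Darboux on all of $\tilde V^{1}$.
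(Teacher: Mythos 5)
Your argument is essentially the paper's own proof: the planar LCF coordinates $(\mathcal{L},\delta,\mathcal{G},\gamma)$ are Darboux by the Levi-Civita restriction of $K.S.$ (Proposition \ref{prop: identification}) together with the planar chain of symplectic changes, and the spatial case then follows from Lemma \ref{Rotation Lemma}, whose correction term $d(x'_{1}y'_{2}-x'_{2}y'_{1})\wedge dh=d\mathcal{H}\wedge d\zeta$ supplies the remaining pair. Your explicit consistency check that $k_{f}$ preserves the orbital plane and node (so $\mathcal{H}=H$, $\zeta=h$ are unaffected) is left implicit in the paper but is correct and harmless.
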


\begin{rem} Started with a formula of \cite{CoursChenciner}, the set of coordinates $(\mathcal{L}, \delta, \mathcal{G}, \gamma)$ was established and was called ``Delaunay-like coordinates'' in \cite{DoubleInnerCollision}. On the other hand, a set of closely-related Darboux coordinates of the (non-regular) phase space has been built by Levi-Civita in \cite{NuovoElementi} both in the planar and the spatial cases with a very different approach. The nomination of LCF stands for Levi-Civita, Chenciner, and F\'ejoz.
\end{rem}

Note that LCF coordinates, similarly to the Delaunay coordinates, are not well-defined for collision-ejection Keplerian motions: there is no well-defined ``orbital plane'' for a degenerate ellipse. 

\begin{rem}As another application of Lemma \ref{Rotation Lemma}, we may deduce the symplecticity of the spatial Delaunay coordinates $(L,l,G,g,H,h)$ from that of the planar Delaunay coordinates $(L,l,G,g)$:
{indeed, from
$d y_{1} \wedge d x_{1} + d y_{2} \wedge d x_{2}=d L \wedge d l + d G \wedge d g,$ and by definition of $H=x'_{1} y'_{2}-x'_{2} y'_{1}$, one gets from Lemma \ref{Rotation Lemma} that 
$$d y_{1} \wedge d x_{1} + d y_{2} \wedge d x_{2} + d y_{3} \wedge d x_{3}=d L \wedge d l + d G \wedge d g + d H \wedge d h.$$}
\end{rem}

\section{Averaging and Regularization} \label{Section: Averaging and Regularization}
\subsection{Jacobi decomposition of the spatial three-body problem}
The spatial three-body problem is a Hamiltonian system on the phase space
$$\Pi:=\left\{(p_{j}, q_{j})_{j=0,1,2}=(p_{j}^{1}, p_{j}^{2}, p_{j}^{3}, q_{j}^{1}, q_{j}^{2}, q_{j}^{3}) \in (\R^{3} \times \R^3)^3 |\,  \forall 0 \leq j \neq k \leq 2, q_j \neq q_k \right\}, $$
with (canonical) symplectic form 
$$\sum^{2}_{j=0} \sum^{3}_{l=1} d p_j^l \wedge d q_j^l,$$
and the Hamiltonian function
$$F=\dfrac{1}{2} \sum_{0 \le j \le 2} \dfrac{\|p_j\|^2}{m_j} -  \sum_{0 \le j < k \le 2} \dfrac{m_j m_k}{\|q_j- q_k\|},$$
in which $q_0,q_1,q_2$ denote the positions of the three particles, and $p_0,p_1,p_2$ denote their conjugate momenta respectively. The Euclidean norm of a vector in $\R^{3}$ is denoted by $\|\cdot\|$. The gravitational constant has been set to $1$.

The Hamiltonian $F$ is invariant under the translations in positions. To reduce the system by this symmetry, we {switch} to the \emph{Jacobi coordinates} $(P_i, Q_i),i=0, 1, 2,$ defined as
\begin{equation*} 
\left\{
\begin{array}{l} P_0=p_0+p_1+ p_2 \\ P_1=p_1+ \sigma_1 p_2\\ P_2 = p_2
\end{array} \right.
\hbox{ \phantom{aaaaaaaqqqqaa}}
\left\{
\begin{array}{l} Q_0=q_0 \\ Q_1=q_1- q_0 \\ Q_2=q_2-\sigma_0 q_0-\sigma_1 q_1,
\end{array} \right.
\end{equation*}
where 
$$\dfrac{1}{\sigma_0}=1+\dfrac{m_1}{m_0},  \dfrac{1}{\sigma_1}=1+\dfrac{m_0}{m_1}.$$
{Due to the symmetry, the Hamiltonian function is independent of $Q_{0}$. We fix the first integral $P_{0}$ (conjugate to $Q_{0}$) at $P_{0}=0$ and reduce the translation symmetry of the system by eliminating $Q_{0}$. In coordinates $(P_i, Q_i),i=1, 2$, the (reduced) Hamiltonian function $F=F(P_{1}, Q_{1}, P_{2}, Q_{2})$ thus describes the motion of two fictitious particles. }

We further decompose the Hamiltonian $F(P_{1}, Q_{1}, P_{2}, Q_{2})$ into two parts $F=F_{Kep}+F_{pert}$, where the \emph{Keplerian part} $F_{Kep}$ and the \emph{perturbing part} $F_{pert}$ are respectively
\begin{align*}
&F_{Kep}=\dfrac{\|P_1\|^2}{2 \mu_1}+\dfrac{\|P_2\|^2}{2 \mu_2}-\dfrac{\mu_1 M_1}{\|Q_1\|}-\dfrac{\mu_2 M_2}{\|Q_2\|},
  \\&F_{pert}=-\mu_1 m_2\Bigl[\dfrac{1}{\sigma_o}\bigl(\dfrac{1}{\|Q_2-\sigma_0 Q_1\|}-\dfrac{1}{\|Q_2\|}\bigr)+\dfrac{1}{\sigma_1}\bigl(\dfrac{1}{\|Q_2+\sigma_1 Q_1\|}-\dfrac{1}{\|Q_2\|}\bigr)\,\Bigr],
 \end{align*} \label{Not: pert part}
 with (as in \cite{QuasiMotionPlanar})
 \begin{align*}&\dfrac{1}{\mu_1}=\dfrac{1}{m_0}+\dfrac{1}{m_1}, \, \dfrac{1}{\mu_2}=\dfrac{1}{m_0+m_1}+\dfrac{1}{m_2},\\ & M_1=m_0+m_1, M_2=m_0+m_1+m_2.
 \end{align*} 

We shall only be interested in the region of the phase space where $F=F_{Kep}+F_{pert}$ is a small perturbation of a pair of Keplerian elliptic motions with non-intersecting orbits. 

\subsection{Regularized Hamiltonian}
We now apply Kustaanheimo-Stiefel regularization to regularize the inner double collisions $\|Q_{1}\|=0$ of the system $F$. 

We fix a negative energy surface 
$$F=-f < 0,$$
and switch to a new time variable $\tau$ which satisfies 
$$\|Q_{1}\| \,d \tau = d t.$$ 
In time $\tau$, the corresponding motions of the particles are governed by the Hamiltonian $\|Q_{1}\| (F+f)$ and lie inside its zero-energy level. {From now on, we will call $K.S.$ the mapping}
$$(z, w, P_{2}, Q_{2}) \mapsto (Q_{1}=\bar{z} i z, P_{1}=\dfrac{\bar{z} i w}{2 |z|^{2}}, P_{2}, Q_{2})$$
and set
$$\mathcal{F}=(K.S.)^{*} \left(\|Q_{1}\|\, (F+f)\right).$$
This regularized Hamiltonian is a well-defined function on $\Sigma^{0} \times T^{*} (\R^{3} \setminus \{0\})$ and is decomposed as
$$\mathcal{F}=\mathcal{F}_{Kep}+\mathcal{F}_{pert},$$
where the \emph{regularized Keplerian part}
$$\mathcal{F}_{Kep} = K.S.^*\Bigl(\|Q_1\|(F_{Kep} + f)\Bigr)=\dfrac{|w|^2}{8 \mu_1}\, +\Bigl(f+\dfrac{\|P_2\|^2}{2 \mu_2}-\dfrac{\mu_2 M_2}{\|Q_2\|}\Bigr)|z|^2\,-\mu_1 M_1$$
describes the skew-product motion of the outer body moving on an Keplerian elliptic orbit, slowed-down by four ``inner'' harmonic oscillators in $1:1:1:1$-resonance,
and the \emph{regularized perturbing part}
$$\mathcal{F}_{pert} = K.S.^*\Bigl(\|Q_1\|F_{pert}\Bigr)$$
is small. Both terms extend analytically through the set $\{z=0\}$ corresponding to inner double collisions of $F$. 

The function $\mathcal{F}$ descends to a function (yet still called $\mathcal{F}$) on $V^{0} \setminus \{0\} \times T^{*} (\R^{3}\setminus \{0\})$ that we shall deal with in the sequel.

\subsection{The quadrupolar system and its regularized counterpart}
Due to the proper degeneracy of $F_{Kep}$ (that all bounded orbits of the Kepler problem are closed), to carry out perturbative studies of $F_{Kep}$, we must understand the dynamics the \emph{secular system} 
$$F_{sec}=\dfrac{1}{4 \pi^{2}} \int_{\T^{2}}  F_{pert} d l_{1} d l_{2},$$
in which $l_{1}, l_{2}$ are the mean anomalies of the inner and outer ellipses respectively. The system $F_{sec}$ has 6 degrees of freedom and has the Keplerian $\T^{2}$ and the rotational SO(3)-symmetry. In particular, it is not \emph{a priori} integrable.  

We now consider the lunar case of the three-body problem, that is when the ratio of the semi major axes $\alpha=\dfrac{a_{1}}{a_{2}}$ is sufficiently small, with $a_{1}$, $a_{2}$ the semi major axes of the inner and outer ellipses respectively. 
We suppose in addition that both the inner and outer ellipses are non-circular and non-degenerate so that we may use the Delaunay coordinates 
$$(L_{i}, l_{i}, G_{i}, g_{i}, H_{i}, h_{i}), \quad i=1,2$$
 to describe them respectively.

The function $F_{pert}$ is naturally an analytic function of $a_{1}, a_{2}, Q_{1}/a_{1}, Q_{2}/a_{2}$ ({by replacing $Q_{i}$ by $a_{i}\, \dfrac{Q_{i}}{a_{i}}, i=1,2$}).  With the substitution $a_{2}=\dfrac{a_{1}}{\alpha}$, it is also an analytic function of $a_{1}, \alpha, Q_{1}/a_{1}, Q_{2}/a_{2}$. By expanding $F_{sec}$ in powers of $\alpha$, we find
$$F_{sec}=F_{quad} \, \alpha^{3} + O (\alpha^{4}),$$
where, as noticed by Harrington \cite{Harrington1968} that if we perform the standard Jacobi's elimination of the nodes, that is to fix the total angular momentum vertically having norm $C$ (one has therefore $h_{1}=h_{2}+\pi$ and $H_{1}+H_{2}=C$) and to reduce by the $S^{1}$-symmetry of rotations around the vertical axis (so as to ignore the angles $h_{1}, h_{2}$ ), the (reduced) \emph{quadrupolar system}

\begin{align*}
 F_{quad}&=-\dfrac{m_{0} m_{1} m_{2}}{m_{0}+m_{1}}\dfrac{ L_{2}^{3}}{8 a_{1} G_2^3} \left\{3\dfrac{G_1^2}{L_1^2} \Bigl[1+\dfrac{(C^2-G_1^2-G_2^2)^2}{4 G_1^2 G_2^2}\Bigr] \right.\\
 &\left.\phantom{aaaaaaaaaaaa}+ 15 \Bigl(1-\dfrac{G_1^2}{L_1^2}\Bigr)\Bigl[\cos^2{g_1}+\sin^2{g_1} \dfrac{(C^2-G_1^2-G_2^2)^2}{4 G_1^2 G_2^2}\Bigr] -6\Bigl(1-\dfrac{G_1^2}{L_1^2}\Bigr)-4\right\}
\end{align*}
is integrable, since it is also independent of the outer argument of the perihelion $g_{2}$. Its dynamics of $F_{quad}$ is investigated by Lidov-Ziglin \cite{LidovZiglin} and later in Ferrer-Osacar \cite{FerrerOsacar}, Farago-Laskar \cite{FaragoLaskar} and Palaci\'an-Sayas-Yanguas \cite{PalacianSayasYanguas}.

By construction,  the system $F_{quad}$ is not defined when the inner ellipse degenerates since the angle $l_{1}$ is not defined for a degenerate inner ellipse. Nevertheless, we notice that 

\begin{prop}(Appendix \ref{Appendix: Analytic extension}) $F_{quad}$ extends analytically to degenerate inner ellipses.
\end{prop}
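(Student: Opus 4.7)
My strategy is to rewrite the expression inside the braces defining $F_{quad}$ as a polynomial in SO(3)-invariant scalar products of the vectors $\vec{e}_{1},\vec{G}_{1},\vec{G}_{2}$, divided only by quantities ($a_{1},L_{1},G_{2}$) that remain strictly positive near the degenerate inner limit. Since the eccentricity vector $\vec{e}_{1}$ and the angular momentum vector $\vec{G}_{1}$ are analytic functions of the Keplerian orbital state, with $\vec{G}_{1}\to 0$ and $|\vec{e}_{1}|\to 1$ as the inner ellipse degenerates, this will make the analytic extension to $G_{1}=0$ manifest on the reduced phase space.

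The only apparently singular contributions come from the two $g_{1}$-dependent terms together with the factor $(C^{2}-G_{1}^{2}-G_{2}^{2})^{2}/(4G_{1}^{2}G_{2}^{2})$, since Delaunay's $g_{1}$ and the line of nodes are ill-defined at $G_{1}=0$. First I would use $(C^{2}-G_{1}^{2}-G_{2}^{2})/2=\vec{G}_{1}\cdot\vec{G}_{2}$ to recognize that this factor equals $\cos^{2}J$, where $J$ is the mutual inclination. In Jacobi's elimination of the nodes the inner line of nodes is directed along $\vec{G}_{1}\times\vec{G}_{2}$, so the decomposition
\begin{equation*}
\hat{e}_{1}=\cos g_{1}\,\hat{n}_{1}+\sin g_{1}\,(\hat{G}_{1}\times\hat{n}_{1}),\qquad \hat{n}_{1}=\dfrac{\vec{G}_{1}\times\vec{G}_{2}}{|\vec{G}_{1}\times\vec{G}_{2}|},
\end{equation*}
combined with the elementary identities $\hat{n}_{1}\cdot\hat{G}_{2}=0$ and $(\hat{G}_{1}\times\hat{n}_{1})\cdot\hat{G}_{2}=-\sin J$, gives the geometric relation
\begin{equation*}
e_{1}\sin g_{1}\,\sin J=-\dfrac{\vec{e}_{1}\cdot\vec{G}_{2}}{G_{2}}.
\end{equation*}

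Squaring this identity and using $\cos^{2}J=1-\sin^{2}J$ and $\cos^{2}g_{1}+\sin^{2}g_{1}=1$ together with $e_{1}^{2}=1-G_{1}^{2}/L_{1}^{2}$, the full $g_{1}$-dependent combination $15(1-G_{1}^{2}/L_{1}^{2})\bigl[\cos^{2}g_{1}+\sin^{2}g_{1}\cos^{2}J\bigr]$ collapses to $15|\vec{e}_{1}|^{2}-15(\vec{e}_{1}\cdot\vec{G}_{2})^{2}/G_{2}^{2}$; simultaneously the term $3(G_{1}^{2}/L_{1}^{2})\cdot(C^{2}-G_{1}^{2}-G_{2}^{2})^{2}/(4G_{1}^{2}G_{2}^{2})$ simplifies to $3(\vec{G}_{1}\cdot\vec{G}_{2})^{2}/(L_{1}^{2}G_{2}^{2})$. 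After these substitutions the expression in braces becomes the polynomial
\begin{equation*}
-1+6|\vec{e}_{1}|^{2}+\dfrac{3(\vec{G}_{1}\cdot\vec{G}_{2})^{2}}{L_{1}^{2}G_{2}^{2}}-\dfrac{15(\vec{e}_{1}\cdot\vec{G}_{2})^{2}}{G_{2}^{2}}
\end{equation*}
in the SO(3)-invariants $|\vec{e}_{1}|^{2},\vec{G}_{1}\cdot\vec{G}_{2},\vec{e}_{1}\cdot\vec{G}_{2},G_{2}^{2}$, divided by the analytic nowhere-zero factor $a_{1}G_{2}^{3}/L_{2}^{3}$. Each invariant is an analytic function on the full phase space that descends analytically to the reduced phase space and remains regular at $G_{1}=0$, which proves the claimed analytic extension.

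The main obstacle is the careful geometric derivation of the relation $e_{1}\sin g_{1}\,\sin J=-\vec{e}_{1}\cdot\vec{G}_{2}/G_{2}$, since it depends on the sign conventions used in Jacobi's reduction (orientation of the ascending node, sense of measurement of $g_{1}$). Fortunately only its square enters the final formula, so any sign ambiguity is harmless; the remaining computation is purely algebraic.
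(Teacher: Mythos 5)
Your proposal is correct and is essentially the paper's own argument: the paper also proves the extension by rewriting the combinations that involve the ill-defined angles (there $(1-e_{1}^{2})\sin^{2}(i_{1}-i_{2})$ and $\sin^{2}g_{1}\,\sin^{2}(i_{1}-i_{2})$) as rational expressions in the Pauli-Souriau vectors --- which are just linear combinations of your $\vec{G}_{1}$ and $L_{1}\vec{e}_{1}$ --- together with the outer normal, and your key identity $e_{1}^{2}\sin^{2}g_{1}\sin^{2}J=(\vec{e}_{1}\cdot\vec{G}_{2})^{2}/G_{2}^{2}$ is exactly the geometric relation stated in the paper's closing remark. The only point you should make explicit is what ``analytic at degenerate inner ellipses'' refers to: as in the paper, one anchors it on the identification of the space of inner ellipses of fixed $L_{1}$ (circular and degenerate ones included) with $\mathbb{S}^{2}\times\mathbb{S}^{2}$, on which $\vec{G}_{1}$ and $L_{1}\vec{e}_{1}$ are analytic coordinates, so that your final polynomial expression in the invariants, divided by the nonvanishing factor involving $a_{1}$ and $G_{2}$, indeed exhibits the analytic extension at $G_{1}=0$.
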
 

The extended quadrupolar dynamics near degenerate inner ellipses was studied in Lidov-Ziglin \cite{LidovZiglin}, but it is not \emph{a priori} clear that this particular part of their study really relevant to the dynamics of the three-body problem. 

Towards a better understanding of the situation, let us first construct the secular systems of the regularized system. We suppose that the inner ellipse is non-circular, and the outer ellipse is non-circular and non-degenerate. As in \cite{AlmostCollision}, the secular regularized system $\mathcal{F}_{sec}$ is obtained analogously by averaging $\mathcal{F}_{pert}$ over the fast angles $\delta_{1}$ (the angle $\delta$ in the LCF coordinates adapted for the inner body) and $l_{2}'=l_{2} + \frac{f_{1}'(L_{2})}{2 f_{1}(L_{2})} Re\{\bar{P}_{1} Q_{1}\}$ (see \cite{DoubleInnerCollision}; the modification was made to keep the symplectic form and to have the angle $l_{2}'$ to be proportional to the new time $\tau$). We obtain from $\mathcal{F}_{sec}$ the quadrupolar regularized system $\mathcal{F}_{quad}$ by analogous expansion and truncation procedures. 

We aim to compare the (integrable) dynamics of $\mathcal{F}_{quad}$ and $F_{quad}$, in which the former is well-defined near degenerate inner ellipses by construction and the latter is not, but extends analytically to degenerate inner ellipses. In the planar three-body problem, similar situation arises and the dynamics of the regularized secular system is shown to be orbitally conjugate to the extended dynamics of the secular system \cite{DoubleInnerCollision}. Similar result can be established in our situation as well.

Let us first extend the diffeomorphism $k_f$ defined in Subsection \ref{Spatial C-F} (but keep the same notation) by only applying it to the inner body:
$$k_f :(P_1,Q_1,P_2,Q_2) \mapsto \left( \dfrac{P_1}{\sqrt{2 \mu_1 f_1(L_2)} L_{1}} , Q_1,P_2,Q_2\right).$$

\begin{prop}\label{SecularConjugacy}
The initial and secular regularized Hamiltonians satisfy:
$$\mathcal{F}_{sec}= a_1 \cdot F_{sec}  \circ k_f.$$
\end{prop}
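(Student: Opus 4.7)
The plan is to reduce the two secular averages of $\mathcal{F}_{pert}$ to a single Kepler-style change of variable, following F\'ejoz's strategy for the planar case but now using the LCF coordinates to handle the extra rotational degrees of freedom. The starting observation is that $F_{pert}$ depends only on the positions $(Q_1, Q_2)$, and that $\|Q_1\| = |z|^{2}$, so
$$\mathcal{F}_{pert}(z,w,P_2,Q_2) \,=\, (K.S.)^{*}\bigl(\|Q_1\|\,F_{pert}\bigr) \,=\, |z|^{2}\,F_{pert}(Q_1, Q_2).$$
Since $k_f$ fixes $Q_1$ and $Q_2$, we further have $F_{pert}\circ k_f = F_{pert}$, and the scalar prefactor $|z|^{2} = \|Q_1\| = r_1$ is read identically before or after $k_f$.

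The heart of the proof is the inner averaging. By definition of the LCF coordinates in Section \ref{Spatial C-F}, the fast angle $\delta_1$ is $u_1\circ k_f$, where $u_1$ is the eccentric anomaly of the Kepler ellipse determined by the image of the inner body under $k_f$, equivalently the eccentric anomaly of the inner KS-ellipse. On that Kepler ellipse, Kepler's equation $l_1 = u_1 - e_1\sin u_1$ together with $r_1 = a_1(1-e_1\cos u_1)$ yields the standard identity $r_1\,du_1 = a_1\,dl_1$, where $a_1$ is the semi-major axis of the inner KS-ellipse. Pulling back through $k_f$ and combining with the factorization above gives, at fixed slow LCF variables,
$$\int_0^{2\pi}\mathcal{F}_{pert}\,d\delta_1 \,=\, \int_0^{2\pi}\|Q_1\|\,(F_{pert}\circ k_f)\,d(u_1\circ k_f) \,=\, a_1 \int_0^{2\pi} (F_{pert}\circ k_f)\,d(l_1\circ k_f).$$

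For the outer averaging, one uses that the F\'ejoz modification $l_2' - l_2 = \tfrac{f_1'(L_2)}{2 f_1(L_2)}\,Re\{\bar{P}_1 Q_1\}$ depends only on $(P_1, Q_1, L_2)$ and not on $l_2$ itself, so at fixed remaining coordinates $dl_2' = dl_2$ and averaging over $l_2'$ coincides with averaging over $l_2$. Performing both averages then yields
$$\mathcal{F}_{sec} \,=\, \frac{1}{4\pi^{2}}\!\iint \mathcal{F}_{pert}\,d\delta_1\,dl_2' \,=\, \frac{a_1}{4\pi^{2}}\!\iint F_{pert}\,d(l_1\circ k_f)\,dl_2 \,=\, a_1\cdot (F_{sec}\circ k_f),$$
since the identification of LCF slow coordinates with the Delaunay slow coordinates of the post-$k_f$ Kepler ellipse turns the remaining integral into $F_{sec}$ evaluated at $k_f$ of the point.

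The main obstacle is not analytic but bookkeeping: one has to track which coordinates are held fixed in each of the two averages (LCF on the inner side, modified Delaunay on the outer side), verify that the identifications induced by $k_f$ on the slow variables match the ones implicit in the definition of $F_{sec}$, and confirm that the relevant prefactor $a_1$ is indeed the semi-major axis of the KS-ellipse (equivalently $a_1 \circ k_f$ read through the Kepler interpretation). Once this is pinned down, the only genuine calculation is the classical Kepler identity $r_1\,du_1 = a_1\,dl_1$ together with the trivial shift of $l_2'$ relative to $l_2$.
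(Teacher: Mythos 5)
Your proposal is correct and follows essentially the same route as the paper: the invariance of $\mathcal{F}_{pert}=\|Q_1\|F_{pert}$ under $k_f$ (since $k_f$ fixes the configuration variables), the identification $\delta_1=u_1\circ k_f$, the Kepler identity $\|Q_1\|\,du_1=a_1\,dl_1$, and the observation that the shift from $l_2$ to $l_2'$ does not affect the double average (the paper phrases this as $d\delta_1\wedge dl_2=d\delta_1\wedge dl_2'$ for fixed elliptic orbits). No gap to report.
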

\begin{proof}
This is a direct generalization of \cite[Proposition 3.1]{DoubleInnerCollision} to the spatial case. 
Observing that two elliptic orbits being fixed, we have $d \delta_{1} \wedge d l_{2}=d \delta_{1} \wedge d l'_{2}$ and thus
\begin{align*}
\mathcal{F}_{sec} &= \dfrac{1}{4 \pi^{2}} \int_{\T^{2}} \mathcal{F}_{pert} d \delta_{1} d l'_{2}\\
&= \dfrac{1}{4 \pi^{2}} \int_{k_{f}(\T^{2})} \mathcal{F}_{pert} \circ k_{f}^{-1} d (\delta_{1} \circ k_{f}^{-1}) d l_{2} .\\
\end{align*}
Since the map $k_{f}$ preserves the configuration coordinates $(Q_{1}, Q_{2})$ and $\mathcal{F}_{pert} = \|Q_{1}\| F_{pert}$ is only a function of the configuration variables, $\mathcal{F}_{pert} = \|Q_{1}\| F_{pert}$ is invariant under $k_{f}$. Moreover, $\delta_{1} \circ k_{f}^{-1} = u_{1}$, therefore
\begin{align*}
\mathcal{F}_{sec} &= \dfrac{1}{4 \pi^{2}} \int_{\T^{2}} \|Q_{1}\| F_{pert} d u_{1} d l_{2}\\
&= \dfrac{a_{1}}{4 \pi^{2}} \int_{\T^{2}}  F_{pert} d l_{1} d l_{2}\\
&= F_{sec}.\\
\end{align*}
The last equality follows from the relation 
$$\|Q_{1}\| \, d u_{1} = a_{1} d l_{1},$$ 
derived the Kepler equation
$$l_{1}=u_{1}-e_{1} \sin u_{1}.$$
\end{proof}

We thus obtain
$$\mathcal{F}_{quad}= a_1 \cdot F_{quad}  \circ k_f.$$

Unfortunately, the mapping $k_{f}$ is not symplectic for the secular symplectic structures involved, hence the dynamics of $\mathcal{F}_{quad}$ is not directly equivalent to that of $F_{quad}$. Nevertheless, the following theorem gives a direct and simple link between them.

\begin{theo}\label{theo:quadrupolar conjugacy}
For fixed masses $m_0$, $m_1$ and $m_2$,  semi major axes $a_1 << a_2$, energy $-f$<0, and angular momentum $C$, after full reduction by the SO(3)-symmetry and the Keplerian $\T^2$-action of the fast angles, there exists a fictitious value $m'_2 > 0$ of the outer mass, such that up to a factor depending only on $a_{1}$ and the masses $m_{2}$ and $m'_{2}$, the system $\mathcal{F}_{quad}$ is conjugated to $F_{quad}$, provided that $m'_2$ substitutes for $m_2$ in $F_{quad}$. 
\end{theo}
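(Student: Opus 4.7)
Starting from Proposition \ref{SecularConjugacy}, the observation that $k_f$ preserves the configuration variables (hence the semi-major axes $a_1, a_2$ and their ratio $\alpha$) implies that the quadrupolar-order truncation yields
$$\mathcal{F}_{quad} = a_1 \cdot F_{quad} \circ k_f.$$
The plan is to show that on the fully reduced phase space, the non-symplectic obstruction $k_f$ can be absorbed by a single modification of the outer mass $m_2 \to m'_2$, up to an overall scalar factor.

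I would descend both sides to the fully reduced phase space. After the Keplerian $\T^{2}$-reduction (fixing $\mathcal{L}_1, L_2$ on the regularized side and $L_1, L_2$ on the Delaunay side) and Jacobi's elimination of the nodes (fixing $C$ and quotienting by the vertical rotation), both reduced spaces are 2-dimensional, with coordinates $(\mathcal{G}_1, \gamma_1)$ respectively $(G_1, g_1)$. By Proposition \ref{Delaunay-like Darboux} the LCF coordinates are Darboux, so the reduced symplectic forms are $d\mathcal{G}_1 \wedge d\gamma_1$ and $dG_1 \wedge dg_1$. The map $k_f$ rescales only $P_1$ by the factor $\lambda = (\sqrt{2\mu_1 f_1(L_2)}\, L_1)^{-1}$, which depends only on the $\T^{2}$-integrals $L_1, L_2$ and is therefore constant on each reduced leaf; it fixes the pericenter direction ($\gamma_1 = g_1$) and rescales the angular-momentum modulus ($G_1 \mapsto \lambda G_1$). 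Thus on each leaf $k_f$ is a conformally symplectic transformation of conformal factor $\lambda$.

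The core step is to choose $m'_2$ so that the substitution $m_2 \to m'_2$ cancels this conformal scaling up to a constant. Using $\mu_2 M_2 = m_2 (m_0 + m_1)$, the quantity $f_1(L_2) = f - m_2 (m_0+m_1)/(2 a_2)$ is affine in $m_2$, and the requirement $\lambda = 1$ after substitution translates into the affine equation
$$2 \mu_1^3 M_1 a_1 \Bigl(f - \frac{m'_2 (m_0 + m_1)}{2 a_2}\Bigr) = 1,$$
which admits a positive solution $m'_2 > 0$ under the elliptic-motion hypothesis $f > 1 / (2 \mu_1^3 M_1 a_1)$, compatible with the lunar regime $a_1 \ll a_2$. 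With this choice of $m'_2$ the identification $(\mathcal{G}_1, \gamma_1) = (G_1, g_1)$ becomes a symplectomorphism, $k_f$ descends to the identity on the reduced phase space, and collecting the mass-dependent prefactors of $F_{quad}$ (in particular $m_0 m_1 m_2 / (m_0 + m_1)$ and $L_2^3 / G_2^3$ from Harrington's formula, together with the factor $a_1$ from Proposition \ref{SecularConjugacy}) yields $\mathcal{F}_{quad} = c(a_1, m_2, m'_2) \cdot F_{quad}$ with $m'_2$ substituted for $m_2$. The main obstacle is this last algebraic simplification: verifying that the single mass substitution suffices to absorb both the $\lambda$-rescaling and the induced changes in $L_2, G_2$, so that the residue collapses to a pure scalar depending only on $a_1, m_2, m'_2$.
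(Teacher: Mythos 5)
Your overall strategy coincides with the paper's: reduce by the Keplerian $\T^2$-action and, after Jacobi's elimination of the nodes, use the Darboux charts $(\mathcal{G}_1,\gamma_1,G_2,g_2)$ (Proposition \ref{Delaunay-like Darboux}) and $(G_1,g_1,G_2,g_2)$, note that $k_f$ fails to be symplectic only through a rescaling which is constant once the actions are fixed, and choose $m'_2$ so that this rescaling disappears, yielding conjugacy up to a scalar. The genuine gap is precisely the step you leave open as ``the main obstacle'', and it arises because you hold $a_2$ fixed under the substitution $m_2\mapsto m'_2$ instead of the reduced coordinates. In the paper's bookkeeping the variables $(G_1,g_1,G_2,g_2)$ and the parameters $L_1,L_2,C$ are coordinates on the reduced source and target spaces and are simply identified by $k_f\circ m$; they are not functions of $m_2$, so there are no ``induced changes in $L_2,G_2$'' to absorb. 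In Harrington's expression for $F_{quad}$, the mass $m_2$ enters the coordinate expression only through the linear prefactor $\tfrac{m_0m_1m_2}{m_0+m_1}$ (every other coefficient involves only $m_0,m_1$ and the coordinates), so substituting $m'_2$ multiplies $F_{quad}$ by exactly $m'_2/m_2$; combined with the fact that $k_f\circ m$ is the identity on the Darboux charts, one gets $\tfrac{m'_2}{m_2}\mathcal{F}_{quad}=a_1\,F_{quad}\circ k_f\circ m$, i.e. the scalar is $a_1m_2/m'_2$ with no residue. Without fixing $L_2$ (rather than $a_2$), this collapse is indeed not clear, which is why your proposal stalls there.

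Two further corrections. The equation determining $m'_2$ should be the ``zero regularized inner energy'' condition at fixed $L_2$: $f-\tfrac{(\mu'_2)^3(M'_2)^2}{2L_2^2}=\tfrac{\mu_1^3M_1^2}{2L_1^2}$, which makes $k_f\circ m$ identify $\mathcal{L}_1$ with $L_1$ and hence be the identity (so symplectic) on the charts; your literal condition $\lambda=1$, i.e. $2\mu_1 f_1 L_1^2=1$, is a normalization slip (the constant relating $\mathcal{G}_1$ to $G_1\circ k_f$ is not the raw $P_1$-scaling factor), and it is written with $a_2$ held fixed. Existence and positivity of $m'_2$ follow because $m_2\mapsto\mu_2^3M_2^2$ is a diffeomorphism of $(0,+\infty)$, and the required inequality $f>\tfrac{\mu_1^3M_1^2}{2L_1^2}$ is not an extra ``elliptic-motion hypothesis'': it is derived from the energy relation $F=-f$ together with $|F_{pert}|<\tfrac{\mu_2^3M_2^2}{2L_2^2}$ in the lunar regime. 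Finally, the space obtained by full reduction by the stated symmetries is $4$-dimensional, with coordinates $(G_1,g_1,G_2,g_2)$; your $2$-dimensional leaves use in addition the conservation of $G_2$, which comes from the quadrupolar truncation rather than from the listed symmetries, and the conjugacy must (and in the paper does) also account for the $(G_2,g_2)$-motion, on which $k_f\circ m$ acts as the identity.
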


\begin{proof} 
We fix $\vec{C}$ vertical, so that by standard Jacobi's elimination of the nodes, $(\mathcal{G}_{1}, \gamma_{1}, G_{2}, g_{2})$ form a set of Darboux coordinates on the reduced source space of $k_{f}$, and $(G_{1}, g_{1}, G_{2}, g_{2})$ form a set of Darboux coordinates on the reduced target space of $k_{f}$.

Consider the system
$$F=F_{Kep}+F_{pert}= - \dfrac{\mu_{1}^{3} M_{1}^{2}}{2 L_{1}^{2}}-\dfrac{\mu_{2}^{3} M_{2}^{2}}{2 L_{2}^{2}}+F_{pert}$$
on the energy level 
$$F=-f, \quad f >0.$$
Since for small enough  $\alpha$,  $|F_{pert}|$ is smaller than $\dfrac{\mu_{2}^{3} M_{2}^{2}}{2 L_{2}^{2}}$, we have that 
$$\dfrac{\mu_{1}^{3} M_{1}^{2}}{2 L_{1}^{2}}<f.$$

Now as the mapping 
$$m_2 \mapsto \mu_2^{3} M_2^{2}=\dfrac{(m_{0}+m_{1})^{3} m_{2}^{3}}{m_{0}+m_{1}+m_{2}}$$
 is a diffeomorphism from $(0,+\infty)$ to itself for any positive $m_{0}, m_{1}$, there exists some $m'_2>0$, such that  
$$f_{1} (L_{2},m_{0},m_{1},m'_{2})=f-\dfrac{\mu_{2}^{3} M_{2}^{2}}{2 L_{2}^{2}}=\dfrac{\mu_{1}^{3} M_{1}^{2}}{2 L_{1}^{2}}.$$
 The composition of the mapping $k_f$ with the mapping $m: m_2 \mapsto m'_2$ is thus just the identity between $\mathcal{L}_{1}$ and $L_{1}$, and between two Darboux charts $(\mathcal{G}_{1}, \gamma_{1}, G_{2}, g_{2})$ and $(G_{1}, g_{1}, G_{2}, g_{2})$ in the reduced source and target spaces of spatial Keplerian ellipses\footnote{Remind that circular inner or outer orbits or degenerate outer orbits are removed.} respectively. Moreover, since these charts are open and dense in the source and target spaces, we conclude that this composition identifies the reduced source and the target space symplectically. 
 
We deduce from $\mathcal{F}_{quad}= a_1 \cdot F_{quad}  \circ k_f$ and the expression of $F_{quad}$ that 
$$\dfrac{m_{2}'}{m_{2}} \mathcal{F}_{quad}=\mathcal{F}_{quad} \circ m=a_{1} \cdot F_{quad} \circ k_{f} \circ m.$$
Hence the dynamical behaviors of the reduced quadrupolar regularized system agrees, up to a factor $\dfrac{a_{1} m_{2}}{m_{2}'}$, with that of the non-regularized reduced quadrupolar system. 
\end{proof}

Therefore the study of Lidov-Ziglin \cite{LidovZiglin} of $F_{quad}$ near degenerate inner ellipses is indeed relevant to the regularized secular dynamics of the spatial lunar three-body problem in the way manifested by Theorem \ref{theo:quadrupolar conjugacy}. In \cite{ZLthesis}, This is used in an essential way to establish the existence of almost-collision orbits in the spatial lunar three-body problem.

\begin{rem} The necessity of shifting one of the masses in the above theorem is due to the {possibly non-zero} energy of the regularized system. Instead of modifying the outer mass $m_{2}$, another possibility is to modify the masses $m_{0}$ and $m_{1}$. This is explored in \cite{AlmostCollision}.
\end{rem}

\appendix 
\section{Analytic extension of $F_{quad}$ to degenerate inner ellipses}\label{Appendix: Analytic extension}
In this appendix, we show by direct calculation that 

\begin{prop} The function $F_{quad}$ extends to an analytic function in a neighborhood of degenerate inner ellipses.
\end{prop}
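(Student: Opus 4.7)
The plan is to bypass the apparent singularities of the Delaunay chart at $G_1=0$ by re-deriving $F_{quad}$ in terms of the inner angular-momentum vector $\vec{G}_1$ and the scaled Laplace--Runge--Lenz vector $\vec{A}_1$ (normalised so that $|\vec{A}_1|=e_1$). Both vectors are globally analytic on the inner Kepler phase space, remain well defined at degenerate ellipses (where $\vec{G}_1=0$ and $\vec{A}_1$ is the unit vector along the line of degenerate motion), and their six components provide an analytic embedding of the space of inner ellipses at fixed $L_1$ into $\mathbb{R}^{3}\times\mathbb{R}^{3}$.

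First I would return to the Legendre expansion $F_{pert}=\sum_{n\ge1}F_{pert}^{(n)}$ in powers of $\|Q_1\|/\|Q_2\|$: the dipole contribution cancels thanks to $\sigma_0+\sigma_1=1$, leaving as the leading term the quadrupole
\[
F_{pert}^{(2)}=-\frac{\mu_1 m_2}{2\|Q_2\|^5}\bigl(3(Q_1\cdot Q_2)^2-\|Q_1\|^2\|Q_2\|^2\bigr),
\]
which is polynomial of degree $2$ in $Q_1$. An elementary integration over the eccentric anomaly then yields the inner second-moment tensor
\[
\langle Q_1\otimes Q_1\rangle_{l_1}=\frac{a_1^2}{2}\bigl[(1-e_1^2)(I-\hat{k}_1\hat{k}_1^{T})+5e_1^2\,\hat{p}_1\hat{p}_1^{T}\bigr],
\]
with $\hat{k}_1=\vec{G}_1/G_1$ and $\hat{p}_1=\vec{A}_1/e_1$. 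The key point is that the two singular scalar-tensor pairings each reassemble into a polynomial,
\[
(1-e_1^2)\hat{k}_1\hat{k}_1^{T}=\vec{G}_1\vec{G}_1^{T}/L_1^2,\qquad e_1^2\hat{p}_1\hat{p}_1^{T}=\vec{A}_1\vec{A}_1^{T},
\]
so that
\[
\langle Q_1\otimes Q_1\rangle_{l_1}=\frac{a_1^2}{2L_1^2}\bigl(G_1^2\,I-\vec{G}_1\vec{G}_1^{T}\bigr)+\frac{5a_1^2}{2}\vec{A}_1\vec{A}_1^{T}
\]
is manifestly polynomial in the Cartesian components of $(\vec{G}_1,\vec{A}_1)$, hence analytic across $\vec{G}_1=0$.

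Averaging next over the outer mean anomaly $l_2$ is harmless because the outer ellipse is non-degenerate: the outer tensor built from $(3Q_2^iQ_2^j-\|Q_2\|^2\delta^{ij})/\|Q_2\|^5$ is polynomial in $(\vec{G}_2,\vec{A}_2)$. Contracting with $\langle Q_1\otimes Q_1\rangle_{l_1}$ exhibits $F_{quad}$ as a polynomial in the components of $\vec{G}_1,\vec{A}_1,\vec{G}_2,\vec{A}_2$, hence analytic on an open neighborhood of the degenerate inner locus $\{\vec{G}_1=0\}$ in the secular phase space. The only genuine obstacle is the algebraic verification that this vectorial expression really reproduces the Delaunay formula displayed above on the non-degenerate part; this amounts to the substitutions $G_1^2=|\vec{G}_1|^2$ and $2\vec{G}_1\cdot\vec{G}_2=C^2-G_1^2-G_2^2$ (from Jacobi's elimination of the nodes), together with the decomposition of $\vec{A}_1$ along the line of nodes and its orthogonal in the orbital plane to recover the $\cos g_1,\sin g_1$ factors. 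This bookkeeping is unenlightening but entirely routine.
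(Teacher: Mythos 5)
Your argument is correct, and it rests on the same underlying idea as the paper's proof, namely replacing the singular Delaunay-type elements $(e_1,g_1,i_1)$ by a vectorial parametrization of the space of inner ellipses that stays analytic at zero angular momentum: your pair $(\vec{G}_1,\vec{A}_1)$ is linearly equivalent to the Pauli--Souriau coordinates $A,B$ (with $\vec{C}_1=(A-B)/2$ and the pericenter direction along $-(A+B)/2$) used in the appendix. The execution differs, though. The paper starts from the closed-form expression of $F_{quad}$ in the Laplace frame, isolates the only two potentially singular combinations, $(1-e_1^2)\sin^2(i_1-i_2)$ and $\sin^2 g_1\,\sin^2(i_1-i_2)$, and rewrites each as an explicit analytic expression in $(A,B)$ and the outer normal $\vec{N}_2$; you instead re-derive the doubly averaged quadrupole term tensorially, showing that $\langle Q_1\otimes Q_1\rangle_{l_1}=\tfrac{a_1^2}{2L_1^2}\bigl(G_1^2 I-\vec{G}_1\vec{G}_1^{T}\bigr)+\tfrac{5a_1^2}{2}\vec{A}_1\vec{A}_1^{T}$ is polynomial in $(\vec{G}_1,\vec{A}_1)$, so analyticity across $\{\vec{G}_1=0\}$ is manifest and no trigonometric bookkeeping is needed. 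Your route buys a structural statement (polynomial inner dependence, no reference to the Laplace plane), while the paper's is shorter given that the Delaunay formula is already displayed. Note also that the final matching with the displayed formula is not really needed: the quadrupolar system is by construction the doubly averaged $P_2$-term, which is exactly what you computed.

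Two small inaccuracies, neither fatal. First, the outer-averaged tensor is not polynomial in $(\vec{G}_2,\vec{A}_2)$: averaging $(3Q_2Q_2^{T}-\|Q_2\|^2 I)/\|Q_2\|^5$ over $l_2$ produces the factor $a_2^{-3}(1-e_2^2)^{-3/2}$, i.e.\ a $G_2^{-3}$ singularity, so the doubly averaged Hamiltonian is only analytic away from degenerate outer ellipses --- which is exactly the standing assumption, and is visible in the $L_2^3/G_2^3$ prefactor of the paper's $F_{quad}$; your concluding sentence should say ``polynomial in $(\vec{G}_1,\vec{A}_1)$ with coefficients analytic in the outer elements for $G_2\neq 0$''. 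Second, the dipole term cancels because the two Legendre coefficients $\tfrac{1}{\sigma_0}\sigma_0$ and $\tfrac{1}{\sigma_1}(-\sigma_1)$ are opposite; the identity $\sigma_0+\sigma_1=1$ is what makes the quadrupole coefficient equal to $1$.
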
 

\begin{proof}The space of (inner) spatial Keplerian ellipse is homeomorphic to $\mathbb{S}^{2} \times \mathbb{S}^{2}$ (see e.g. \cite[Lecture 4]{Albouy}). The Pauli-Souriau coordinates for the inner spatial Keplerian ellipse $(A_{1}, A_{2}, A_{3}, B_{1}, B_{2}, B_{3})$ with $$A_{1}^{2}+A_{2}^{2}+A_{3}^{2}=B_{1}^{2}+B_{2}^{2}+B_{3}^{2}=L_{1}$$ are just the Descartes coordinates for a couple of points on $\mathbb{S}^{2}_{L_{1}}\subset \R^{3}$ (we set the radius of the sphere $\mathbb{S}^{2}_{L_{1}}$ to be $\sqrt{L_{1}}$). The angular momentum of the inner ellipse is then $\vec{C}_{1}=(\frac{A_{1}-B_{1}}{2}, \frac{A_{2}-B_{2}}{2}, \frac{A_{3}-B_{3}}{3}),$
the direction of the inner pericenter is the direction of $(-\frac{A_{1}+B_{1}}{2}, -\frac{A_{2}+B_{2}}{2}, -\frac{A_{3}+B_{3}}{3})$, and the direction of the inner ascending node is the direction of $(\frac{B_{2}-A_{2}}{2}, \frac{A_{1}-B_{1}}{2}, 0)$.

A calculation leads to
$$F_{quad}=\dfrac{\mu_{1} m_{2}}{2 \alpha^{3}} \int_{\T^{2}} \dfrac{\|Q_{1}\|^{2}}{\|Q_{2}\|^{3}} (3 \cos^{2} \zeta-1) d l_{1} d l_{2}.$$

We take the Laplace plane, \emph{the plane orthogonal to $\vec{C}$} to be the reference plane. In terms of $(e_{1}, g_{1}, i_{1}, e_{2}, i_{2})$ (for which let us restrict $i_{i}$ to the interval $[0, \pi)$\, ), this function takes the form
\small
\begin{align*}
F_{quad}&=-\dfrac{\mu_{1} m_{2}}{8 a_{1} (1-e_{2}^{2})^{\frac{3}{2}}} [3(1-e_{1}^{2})(1+\cos^{2} (i_{1}-i_{2}))+15(\cos^{2}g_{1}+\cos^{2}(i_{1}-i_{2}) \sin^{2} g_{1})-6 e_{1}^{2}-4]\\
              &=-\dfrac{\mu_{1} m_{2}}{8 a_{1} (1-e_{2}^{2})^{\frac{3}{2}}} [-(3(1-e_{1}^{2})+15 \sin^{2} g_{1}) \sin^{2} (i_{1}-i_{2}) + 12 (1-e_{1}^{2})+5].
\end{align*}
\normalsize

We see from this expression that the analyticity of (the extension of) $F_{quad}$ near degenerate inner ellipses directly follows from the analyticity of (the extensions of) the expressions $(1-e_{1}^{2}) \sin^{2} (i_{1}-i_{2})$ and $\sin^{2} g_{1} \sin^{2} (i_{1}-i_{2})$ near degenerate inner ellipses. In Pauli-Souriau coordinates and in terms of the normal vector of the outer ellipse $\vec{N}_{2}=(N_{1}, N_{2}, N_{3})$, these expressions can be written in the following form:
\small
\begin{align*}
(1-e_{1}^{2}) \sin^{2} (i_{1}-i_{2})&
          = \dfrac{1}{4L_{1}^{2}}(A_{1}-B_{1})^{2}+(A_{2}-B_{2})^{2}+(A_{3}-B_{3})^{2}\\
&\quad-\Bigl((A_{1}-B_{1}) N_{1}+(A_{2}-B_{2}) N_{2}+(A_{3}-B_{3}) N_{3}\Bigr)^{2},\\
\sin^{2} g_{1} \sin^{2} (i_{1}-i_{2}) &= \dfrac{((A_{1}+B_{1}) N_{1}+(A_{2}+B_{2}) N_{2}+(A_{3}+B_{3}) N_{3})^{2}}{(A_{1}+B_{1})^{2}+(A_{2}+B_{2})^{2}+(A_{3}+B_{3})^{2}}.
\end{align*}
\normalsize

These formulae show that they can be extended analytically to the set  $$\{(A_{1}, A_{2}, A_{3})=(B_{1}, B_{2}, B_{3})\},$$ corresponding to degenerate inner ellipses. This shows that $F_{quad}$ can be extended analytically to degenerate inner ellipses.

\end{proof}

\begin{rem}
We provide a geometrical way to calculate the expression $$\sin^{2} g_{1} \sin^{2} (i_{1}-i_{2}).$$ Take any vector $\vec{p}$ in the direction of the inner pericenter and its projection $\vec{p}_{1}$ in the outer orbital plane. Let $\vec{p}_{2}$ be the projection of $\vec{p}$ to the direction of node. It is direct to verify that the direction of the node is perpendicular to $\vec{p}_{1}-\vec{p}_{2}$. We have $$\sin^{2} g_{1}=\dfrac{\|\vec{p}-\vec{p}_{2}\|^{2}}{\|\vec{p}\|^{2}}$$ and $$\sin^{2} (i_{1}-i_{2})=\dfrac{\|\vec{p}-\vec{p}_{1}\|^{2}}{\|\vec{p}-\vec{p}_{2}\|^{2}},$$ therefore $$\sin^{2} g_{1} \sin^{2} (i_{1}-i_{2})=\dfrac{\|\vec{p}-\vec{p}_{1}\|^{2}}{\|\vec{p}\|^{2}}.$$ This is a quantity which only depends on the direction of the inner pericenter and the normal direction of the outer orbital plane, while both directions are well defined up to degenerate inner ellipses.
\end{rem}

\begin{ack}These results are part of my Ph.D. thesis \cite{ZLthesis} prepared at the Paris Observatory and the Paris Diderot University. Many thanks to my supervisors Alain Chenciner and Jacques Féjoz. It is due to their help that this note could ever exist and takes its actual form. Sincere thanks to the anonymous referee for pertinent critics and helpful suggestions.
\end{ack}

\bibliography{QuasiperiodicMotionSpatial}
\end{document}